\colorlet{mylinkcolor}{violet}
\colorlet{mycitecolor}{YellowOrange}
\colorlet{myurlcolor}{Aquamarine}
\newtheorem{theorem}{Theorem}
\newtheorem{conjecture}[theorem]{Conjecture}
\newtheorem{problem}[theorem]{Problem}
\newtheorem{proposition}[theorem]{Proposition}
\newtheorem{lemma}[theorem]{Lemma}
\theoremstyle{remark}
\newcommand{\set}[1]{\{#1\}}
\newcommand{\norm}[1]{{\left|#1\right|}}
\newcommand{\setR}{\mathbb{R}}
\newcommand{\calX}{\mathcal{X}}
\newcommand{\calC}{\mathcal{C}}
\newcommand{\calF}{\mathcal{F}}
\newcommand{\N}{\mathbb{N}}
\newcommand{\R}{\mathbb{R}}
\DeclareMathOperator\Inc{Inc}
\DeclareMathOperator\cover{cover}
\DeclareMathOperator\tw{tw}
\DeclareMathOperator\girth{girth}
\let\leq\leqslant
\let\geq\geqslant
\let\subset\subseteq
\let\subsetneq\varsubsetneq
\let\epsilon\varepsilon
\let\epsi\varepsilon
\let\old@setaddresses\@setaddresses
\def\@setaddresses{\bigskip\bgroup\parindent 0pt\let\scshape\relax\old@setaddresses\egroup}
\begin{document}
\title[Sparsity and dimension]{Sparsity and dimension}

\author[G.~Joret]{Gwena\"el  Joret}
\address[G.~Joret]{Computer Science Department \\
  Universit\'e Libre de Bruxelles\\
  Brussels\\
  Belgium}
\email{gjoret@ulb.ac.be}

\author[P.~Micek]{Piotr Micek}
\address[P.~Micek]{Theoretical Computer Science Department\\
  Faculty of Mathematics and Computer Science, Jagiellonian University, Krak\'ow, Poland and Institut f\"ur Mathematik, Technische Universit\"at Berlin, Berlin, Germany}
\email{piotr.micek@tcs.uj.edu.pl}

\thanks{Piotr Micek was partially supported by the National Science Center of Poland under grant no.\ 2015/18/E/ST6/00299.}

\author[V.~Wiechert]{Veit Wiechert}
\address[V.~Wiechert]{Institut f\"ur Mathematik\\
  Technische Universit\"at Berlin\\
  Berlin \\
  Germany}
\email{wiechert@math.tu-berlin.de}

\thanks{G.\ Joret is supported by an ARC grant from the Wallonia-Brussels Federation of Belgium. 
V.\ Wiechert is supported by the Deutsche Forschungsgemeinschaft within the research training group `Methods for Discrete Structures' (GRK 1408).} 

\thanks{A preliminary version of this paper appeared as an extended abstract in the Proceedings of the Twenty-Seventh Annual ACM-SIAM Symposium on Discrete Algorithms (SODA '16)~\cite{SODAversion}.}

\date{\today}


\keywords{Bounded expansion, poset, dimension, cover graph, graph minor}

\begin{abstract}
We prove that posets of bounded height whose cover graphs belong to a fixed class with bounded expansion have bounded dimension.
Bounded expansion, introduced by Ne\v{s}et\v{r}il and Ossona de Mendez as a model for sparsity in graphs, is a property that is naturally satisfied by a wide range of graph classes, from graph structure theory (graphs excluding a minor or a topological minor) to graph drawing (e.g.\ graphs with bounded book thickness).
Therefore, our theorem generalizes a number of results including the most recent one for posets of bounded height with cover graphs excluding a fixed graph as a topological minor.
We also show that the result is in a sense best possible, as it does not extend to nowhere dense classes; in fact, it already fails for cover graphs with locally bounded treewidth.
\end{abstract}
\maketitle

\section{Introduction}

\subsection{Poset dimension and cover graphs}
Partially ordered sets, \emph{posets} for short, are studied extensively in combinatorics, set theory and theoretical computer science.
One of the most important measures of complexity of a poset is its dimension.
The \emph{dimension} $\dim(P)$ of a poset $P$
is the least integer $d$ such that points of $P$ can be embedded into $\setR^d$ in such a way that $x< y$ in $P$ if and only if
the point of $x$ is below the point of $y$ with respect to the product order on $\setR^d$.
Equivalently, the dimension of $P$ is the least $d$ such that there are $d$ linear extensions of $P$ whose intersection is $P$.
Not surprisingly, dimension is hard to compute: Already deciding whether $\dim(P)\leq 3$ is an NP-hard problem~\cite{Yan82}, and for every $\epsi>0$, there is no $n^{1-\epsi}$-approximation algorithm for dimension unless ZPP=NP, where $n$ denotes the size of the input poset~\cite{CLN12}.
Research in this area typically focus on finding witnesses for large dimension and
sufficient conditions for small dimension; see e.g.~\cite{Tro-handbook} for a survey.

Posets are visualized by their Hasse \emph{diagrams}:
Points are placed in the plane and whenever $a<b$ in the poset, and there is no point $c$ with $a<c<b$,
there is a curve from $a$ to $b$  going upwards (that is $y$-monotone).
The diagram represents those relations which are essential in the sense that they are not implied
by transitivity, known as \emph{cover relations}.
The undirected graph implicitly defined by such a diagram is the \emph{cover graph} of the poset.
That graph can be thought of as encoding the `topology' of the poset.

There is a common belief that posets having a nice or well-structured drawing should have small dimension.
But first let us note a negative observation by Kelly~\cite{Kel81}, from 1981,
there is a family of posets whose diagrams can be drawn without edge crossings---undoubtedly qualifying as a `nice'  drawing---and with arbitrarily large dimension,  see Figure~\ref{fig:kelly}.
A key observation about Kelly's construction is that these posets also have large height.
This leads us to our main theorem, which generalizes several previous works in this area.

\begin{figure}[t]
 \centering
 \includegraphics[scale=1.0]{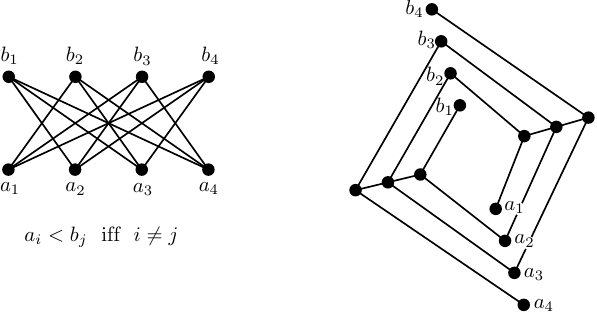}
 \caption{Standard example $S_4$ (left) and Kelly's construction of a planar poset containing $S_4$ (right).
 Recall that the {\em standard example $S_d$} is the poset on $2d$ points consisting of $d$ minimal points $a_1, \dots, a_d$ and $d$ maximal points $b_1, \dots, b_d$ such that $a_i < b_j$ in $S_d$ if and only if $i \neq j$. It is well-known and easily verified that $\dim(S_d)=d$. For every $d \geq 1$, Kelly's construction provides
 a planar poset with $4d-2$ points containing $S_d$ as a subposet, and hence having dimension at least $d$; its
general definition is implicit in the figure.}
 \label{fig:kelly}
\end{figure}

\begin{theorem}\label{thm:bounded-expansion}
For every  class of graphs $\calC$ with bounded expansion, and for every integer $h \geq 1$,
posets of height $h$ whose cover graphs are in $\calC$ have bounded dimension.
\end{theorem}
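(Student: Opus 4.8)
The plan is to work with the standard reformulation of dimension in terms of \emph{reversible sets}. Recall that a set $S$ of incomparable pairs of a poset $Q$ is reversible if some linear extension of $Q$ puts $y$ below $x$ for every $(x,y)\in S$, equivalently (by a classical lemma) if $S$ contains no \emph{alternating cycle}, i.e.\ no sequence $(x_1,y_1),\dots,(x_k,y_k)$ of pairs of $S$ with $x_i\le y_{i+1}$ in $Q$ for all $i$ (indices modulo $k$); and that, for $Q$ not a chain, $\dim(Q)$ equals the least number of reversible sets needed to cover $\Inc(Q)$. So it suffices to exhibit, for every bounded-expansion class $\calC$ and every $h$, a constant $d$ such that $\Inc(P)$ can be covered by $d$ reversible sets whenever $P$ has height $h$ and cover graph in $\calC$.

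I would first normalise. We may assume the cover graph is connected, because the dimension of a disjoint union of posets is at most the maximum of $2$ and the dimensions of the summands. We may also assume $P$ has a least element $\hat 0$: adjoining $\hat 0$ does not change the dimension and only adds one apex vertex to the cover graph, and adding one apex vertex increases each $\nabla_r$ by at most $1$ (delete the apex from any shallow minor), so we remain in a bounded-expansion class. Then I would use the characterisation of bounded expansion by generalized colouring numbers: over $\calC$, the weak $r$-colouring number $\mathrm{wcol}_r$ is bounded for every $r$. Fix a suitable radius $r=r(h)$ depending only on $h$ and a vertex order $\sigma$ of the cover graph with $\mathrm{wcol}_r\le c$, so every vertex weakly $r$-reaches at most $c$ vertices along $\sigma$. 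The one geometric fact I would exploit is that bounded height makes down-sets \emph{local}: if $x\le b$ then $x$ and $b$ are joined by a saturated chain, hence by a path of length $\le h-1$ in the cover graph, so the whole down-set $\mathrm{Down}(b)$ lies in the ball of radius $h-1$ about $b$; consequently its $\sigma$-minimum vertex $m(b)$ lies in $\mathrm{WReach}_{2(h-1)}[\sigma,z]$ for every $z\in\mathrm{Down}(b)$ (route through $b$). Since $\hat 0\in\mathrm{Down}(b)$ for all $b$, every $m(b)$ is forced into the single bounded set $\mathrm{WReach}_{2(h-1)}[\sigma,\hat 0]$.

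The main argument is an induction on $h$. Height $1$ gives an antichain above $\hat 0$, of dimension at most $2$. For the inductive step, I would partition $\Inc(P)$ into boundedly many classes according to a \emph{fingerprint} of $(x,y)$ that records the ``attractors'' $m(x)$ and $m(y)$, their position relative to the weak-reachability sets of $x$ and $y$, and the poset relations among these $O(c)$ distinguished vertices. The design goal for the fingerprint is that each resulting class is either already reversible, or has the property that every alternating cycle inside it lies entirely within an up-set $\mathrm{Up}(w)$ for some vertex $w$ strictly above $\hat 0$. Such an up-set has height $\le h-1$, and its cover graph is an \emph{induced} subgraph of the cover graph of $P$ --- a relation covers in $\mathrm{Up}(w)$ exactly when it covers in $P$, since any element strictly between two elements of $\mathrm{Up}(w)$ lies in $\mathrm{Up}(w)$ --- hence again comes from a bounded-expansion class (and with no larger weak colouring numbers). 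One then covers that class by applying the inductive hypothesis inside $\mathrm{Up}(w)$ and extending each of the resulting linear extensions to $P$, putting everything outside $\mathrm{Up}(w)$ first. Summing over the bounded number of classes gives $d$, and $\dim(P)=\dim(P+\hat 0)\le d$; taking $c$ to be the finite supremum of $\mathrm{wcol}_r$ over $\calC$ finishes the theorem.

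The hard part is the fingerprint construction in the inductive step: one must keep the number of classes bounded and, crucially, ensure that the attractor vertices genuinely inhabit a bounded set common to all pairs rather than roaming over the whole graph, and one must deal with the degenerate possibility that an attractor equals $\hat 0$ itself and thus yields no drop in height. Choosing the right radius $r(h)$ and the right fingerprint so that every class is provably reversible or confinable to a smaller up-set is where essentially all of the work lies; the remaining steps are bookkeeping.
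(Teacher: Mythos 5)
Your proposal is a plan rather than a proof: the step that carries all of the mathematical content is explicitly deferred. You reduce everything to constructing a ``fingerprint'' of each incomparable pair such that every fingerprint class is either reversible or has all of its alternating cycles confined to some up-set $\mathrm{Up}(w)$ with $w$ strictly above $\hat 0$, and you then concede that ``choosing the right radius $r(h)$ and the right fingerprint \dots\ is where essentially all of the work lies.'' That is precisely the theorem. Nothing in the sketch explains how bounded weak colouring numbers are actually brought to bear on an alternating cycle: the preliminary observations (locality of down-sets within radius $h-1$, the attractor $m(b)$ landing in $\mathrm{WReach}_{2(h-1)}[\sigma,\hat 0]$) are correct but by themselves only say that all attractors live in one bounded set; they do not yet distinguish a reversible class from a non-reversible one. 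Moreover, the proposed dichotomy is doubtful as stated: the elements of an alternating cycle need not admit any common lower bound other than $\hat 0$ (already in the standard example $S_d$ with a bottom adjoined, the cycle $(a_1,b_1),\dots,(a_d,b_d)$ lies in no proper up-set), so in the cases that matter the class must be shown reversible outright, and that is exactly where the sparsity has to enter. There is also a bookkeeping slip: for $w$ a minimal element of $P$, the subposet $\mathrm{Up}(w)$ still has height $h$ inside $P$, so the induction as written need not make progress unless you carefully induct on the height of the poset-with-bottom instead.

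For comparison, the paper does not induct on height and does not pass to subposets at all. It uses a different characterisation of bounded expansion ($2h$-centered colourings rather than weak colouring numbers), assigns to each covering chain a \emph{signature} (the sequence of colours, refined by height, along the chain), and proves two concrete structural lemmas: $\sigma$-upsets of distinct points are either disjoint or equal, and the resulting equivalence classes form a laminar family for each set $\hat\Sigma$ of realised signatures. The preorder traversal of the laminar tree yields a linear order in which each class is an interval, and the incomparable pairs are then split by $\hat\Sigma$ and by a left/right bit vector; reversibility of each part is verified directly by exhibiting a connected union of covering chains on which no colour appears exactly once, contradicting the $2h$-centered property. Your choice of weak colouring numbers is not a dead end --- it is the tool used in later work on this problem --- but the argument built on it there is likewise a direct partition of $\Inc(P)$ with an explicit reversibility proof, not a reduction to up-sets of smaller height. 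As it stands, your submission is missing its core argument.
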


\subsection{Background}
Theorem~\ref{thm:bounded-expansion} takes its roots in the following result of Streib and Trotter~\cite{ST14}: Posets with planar cover graphs have dimension bounded in terms of their height.
Note that this justifies the fact that height and dimension grow together in Kelly's construction.
Joret, Micek, Milans, Trotter, Walczak, and Wang~\cite{JMMTWW} subsequently proved that the same result holds in the case of cover graphs of bounded treewidth, of bounded genus, and more generally cover graphs that forbid a fixed apex graph as a minor.
(Recall that a graph is \emph{apex} if it can be made planar by removing at most one vertex.)
In another direction, F{\"u}redi and Kahn~\cite{FK86} showed that posets with cover graphs of bounded maximum degree have dimension bounded in terms of their height.\footnote{We note that the original statement of F{\"u}redi and Kahn's theorem is that posets with \emph{comparability} graphs
of bounded maximum degree have bounded dimension; in fact, they show a $O(\Delta \log^{2} \Delta)$ upper bound on the dimension, where $\Delta$ denotes the maximum degree.
Observe however that the comparability graph of a poset has bounded maximum degree if and only its cover graph does and its height is bounded.}
All this was recently generalized by Walczak~\cite{Walczak17}:
Posets whose cover graphs exclude a fixed graph as a topological minor have dimension bounded by a function of their height.
(We note that Walczak's original proof relies on the graph structure theorems for graphs excluding a fixed topological minor; see~\cite{MW} for an elementary proof.)

Bounded degree graphs, planar graphs, bounded treewidth graphs, and more generally graphs avoiding a fixed (topological) minor are sparse, in the sense that they have linearly many edges.
This remark naturally leads one to ponder whether simply having a sparse cover graph is enough to guarantee the dimension be bounded by a function of the height.
This is exactly the question we address in this work.
Our main contribution is to characterize precisely the type of \emph{sparsity} that is needed to ensure the property.
Before explaining it let us make some preliminary observations.

\begin{figure}[!b]
 \centering
 \includegraphics[scale=0.74]{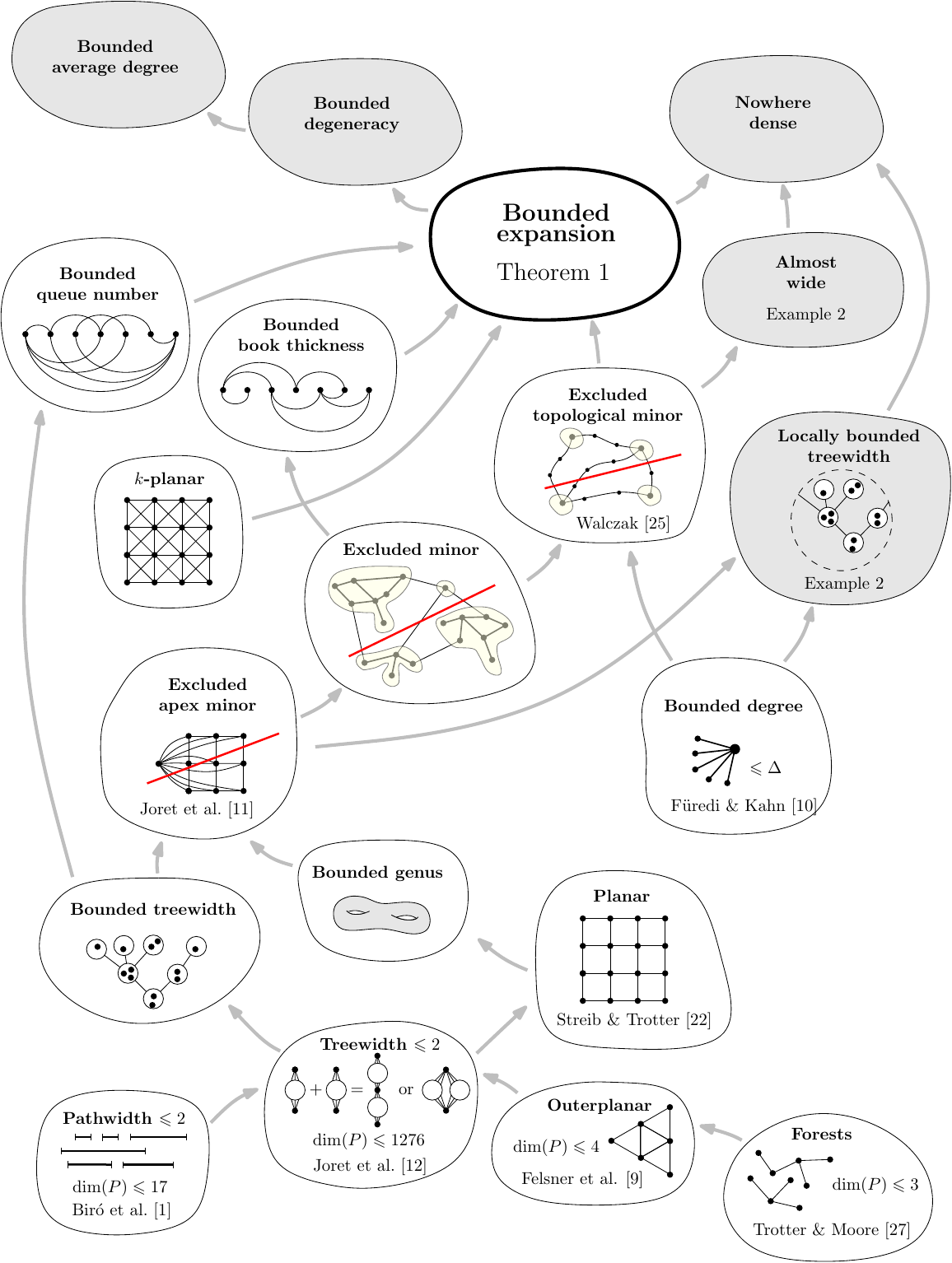}
 \caption{\label{fig:hierarchy}
 A hierarchy of sparse classes of graphs.
 A type of classes is drawn in grey whenever it includes a class $\calC$ such that there are posets of bounded height
 and unbounded dimension with cover graphs in $\calC$. 
 {\footnotesize (Remark: To improve readability some inclusions are not drawn, e.g.\ $k$-planar graphs have locally bounded treewith.)}
 }
\end{figure}

Clearly, asking literally that the cover graph has at most $c n$ edges for some constant $c$, where $n$ is the number of points of the poset, is not enough since one could simply consider the union of a standard example on $\Theta(\sqrt{n})$ points and a large antichain. 
Even requiring that property to hold for every subgraph of the cover graph (that is, bounding its degeneracy) is still not enough:
Define the \emph{incidence poset} $I_G$ of a graph $G$ as the height-$2$ poset with point set $V(G)\cup E(G)$,
where for a vertex $v$ and an edge $e$ we have $v < e$ in $P_G$ whenever $v$ is an endpoint of $e$ in $G$ (and no other relation).
The fact that $\dim(I_{K_n}) \geq \log\log n$ follows easily from repeated applications of the Erd\H{o}s-Szekeres theorem for monotone sequences (see for instance~\cite{DM41}).
Yet, the cover graph of $I_{K_n}$ is a clique where each edge is subdivided once, which is $2$-degenerate.

\subsection{Nowhere dense classes and classes with bounded expansion}
Ne\v{s}et\v{r}il and Ossona de Mendez~\cite{NOdM-book} carried out a thorough study of sparse classes of graphs over the last decade.
Most notably, they introduced the notions of \emph{nowhere dense} classes and classes with \emph{bounded expansion}.
Let us give some informal intuition, see Section~\ref{sec:prelim} for the precise definitions.
The key idea in both cases is to look at minors of \emph{bounded depth}, that is,
minors that can be obtained by first contracting disjoint connected subgraphs of bounded radius, and then possibly removing some vertices and edges.
In a nowhere dense class it is required that bounded-depth minors exclude at least some graph (which can depend on the depth).
In a class with bounded expansion the requirement is stronger:
For every $r \geq 1$, depth-$r$ minors should be sparse, that is, their average degrees should be bounded by some function $f(r)$.

It is well-known that graphs with no $H$-minors have average degree bounded by a function of $H$.
Thus the corresponding class of graphs has bounded expansion, since the average degree of their minors is uniformly bounded by a constant.
The class of graphs with no topological $H$-minors also has bounded expansion,
though in that case the bounding function might not be constant anymore, see~\cite{NOdM-book}.
These remarks show that Theorem~\ref{thm:bounded-expansion} generalizes the aforementioned result for cover graphs excluding a fixed graph as a topological minor, and hence the previous body of work as well.

In Figure~\ref{fig:hierarchy} we give a summary of all the known results about posets with sparse cover graphs having their dimension bounded by a function of their height.
We also mention the cases where dimension is bounded by an absolute constant, see the bottom part of the figure.
Complementing our main result, we describe in Section~\ref{sec:prelim} a family of height-$2$ posets whose cover graphs form a nowhere dense class and having unbounded dimension.
Thus Theorem~\ref{thm:bounded-expansion} cannot be extended to nowhere dense classes.
The graphs constructed moreover have locally bounded treewidth, showing that this property alone is not sufficient either.
Finally, in order to complete our study of sparse cover graphs following the theory of Ne\v{s}et\v{r}il and Ossona de Mendez, we remark that another  specialization of nowhere dense classes called  \emph{almost wide}  classes also fail, in the sense that dimension is not bounded in terms of height for posets with cover graphs belonging to such a class. (We postpone the rather technical definition of almost wide classes until Section~\ref{sec:prelim}.)

\subsection{Applications}
Let us now turn our attention to some applications of Theorem~\ref{thm:bounded-expansion} in the context of graph drawing, where natural classes with bounded expansion appear that do not fit in the previous setting of excluding a (topological) minor.
Consider for instance posets whose diagrams can be drawn with `few' edge crossings.
If we bound the total number of crossings, then the cover graphs have bounded genus, and hence earlier results apply.
On the other hand, if we only bound the number of crossings \emph{per edge} in the drawing,
say at most $k$ such crossings, then we come to the well-studied class of \emph{$k$-planar} graphs.
While $k$-planar graphs are sparse---Pach and T\'{o}th~\cite{PT97} proved that their average degree is at most $8\sqrt{k}$---
they do not exclude any graph as a topological minor (assuming $k \geq 1$),
since every graph has a $1$-planar subdivision: just start with any drawing of the graph in the plane and subdivide its edges around each crossing.
Note however that this could require some edges to be subdivided many times.
Indeed, one can observe that if an $(<r)$-subdivision of a graph $G$ is $k$-planar then $G$ is $kr$-planar.
Combining this fact with Pach and T\'{o}th's result, Ne\v{s}et\v{r}il, Ossona de Mendez, and Wood~\cite{NOdMW12} proved that the class of $k$-planar graphs has bounded expansion.
Therefore, by Theorem~\ref{thm:bounded-expansion}, whenever a poset $P$ has a $k$-planar cover graph, $\dim(P)$ is bounded by a function of $k$ and the height of $P$.

Another example is given by graphs with bounded book thickness.
A \emph{book embedding} of a graph is a collection of half-planes (\emph{pages}) all having the same line as their boundary (the \emph{spine}) such that all vertices of the graph lie on the line, every edge is contained in one of the half-planes, and edges on a same page do not cross.
The \emph{book thickness} (also known as \emph{stack number}) of a graph is the smallest number of pages in a book embedding.
Clearly, every graph on $n$ vertices with book thickness $k$ has at most $2kn$ edges.
Still, every graph has a subdivision with book thickness at most $3$~(see e.g.~\cite{DW05}).
Ne\v{s}et\v{r}il {\it et al.}~\cite{NOdMW12} observed that a result of Enomoto, Miyauchi, and Ota~\cite{EMO99} easily implies that every class of graphs with bounded book thickness has bounded expansion.
Therefore, by Theorem~\ref{thm:bounded-expansion}, whenever a poset $P$ has a cover graph of book thickness at most $k$,  its dimension is bounded by a function of $k$ and the height of $P$.

Yet another class of graphs with bounded expansion from graph drawing is that of graphs with bounded queue number.
A \emph{queue layout} of a graph is an ordering of its vertices (the spine) together with an edge coloring such that there are no two nested monochromatic edges, where two edges are \emph{nested} if all four endpoints are distinct and the endpoints of one edge induce an interval on the spine containing the endpoints of the other edge.
Then the \emph{queue number} of a graph is the minimum number of colors in a queue layout.
Every graph has a subdivision with queue number $2$, as proved by Dujmovi\'c and Wood~\cite{DW05}, who also showed that graphs with bounded queue number form a class with bounded expansion.
A challenging open problem in the area of queue layouts is to decide whether planar graphs have constant queue number (see e.g.~\cite{Dujmovic-jctb-2015}).

\subsection{Proof overview}
In the proof we use a characterization of classes with bounded expansion in terms of $p$-centered colorings.
A \emph{$p$-centered coloring} of a graph $G$ is a vertex coloring of $G$ such that, for every connected subgraph $H$ of $G$, either some color is used exactly once in $H$, or at least $p$ colors are used in $H$.
Ne\v{s}et\v{r}il and Ossona de Mendez~\cite{NOdM-decomp} proved that a class $\calC$ has bounded expansion if and only if there is a function $f:\N \to \N$ such that for every integer $p \geq 1$ and every graph $G\in \calC$, there is a $p$-centered coloring of $G$ using at most $f(p)$ colors.
Given a poset $P$ of height $h$ with cover graph in $\calC$, we work with a $2h$-centered coloring $\phi$ of the cover graph with at most $f(2h)$ colors.

Given this coloring $\phi$, we focus on upsets of points $x\in P$: 
For each point $y$ such that $x \leq y$ in $P$, we consider a sequence of cover relations $x=z_1 < \cdots < z_k =y$ witnessing that $x \leq y$, which we call a {\em covering chain} (thus this is a path from $x$ going up to $y$ in the diagram of $P$).   
Such a covering chain defines a corresponding color sequence $\sigma=(\phi(z_1),\ldots,\phi(z_k))$, whose length is bounded by the height of $P$. 
We then exploit the fact that there are a bounded number of different color sequences and consider upsets of points  w.r.t.\ a fixed color sequence $\sigma$.  
Focusing on such upsets, we uncover a wealth of structure, culminating in a proof that a certain family of subsets of points of $P$ is {\em laminar}. 
In fact, we define one such laminar family for each subset of the set of color sequences. 
Once this is set up, we then exploit the underlying tree structure of these laminar families to bound the dimension of $P$.  

We note that the idea for the first part of the proof---working with colored-upsets and proving that certain well-chosen families of sets are laminar---comes directly from a recent paper of Reidl, S\'anchez Villaamil, and Stavropoulos~\cite{ReidlVS16}, who used it to obtain a new characterization of classes with bounded expansion in terms of `neighborhood complexity'. 
Indeed, their proof method turned out to be perfectly fitted to approach our problem in a clean and simple way. 
As a result, the proof we present in this paper is shorter and simpler than the one we gave in the preliminary version of this paper~\cite{SODAversion}. 
Without going into details, that proof relied on a powerful decomposition of posets into layers called \emph{unfolding}, introduced in the paper of Streib and Trotter~\cite{ST14}, that emerged as a key technical tool in recent papers on poset dimension. 
We see it as an appealing feature of our proof that it avoids unfolding entirely. 
In fact, no background nor tool from poset theory is needed, except for the elementary notion of an alternating cycle (see Section~\ref{sec:posets}).  \\

The paper is organized as follows.
In Section~\ref{sec:prelim} we give the necessary definitions for all sparse classes of graphs depicted in Figure~\ref{fig:hierarchy}, as well as the necessary definitions regarding posets.
Along the way, we also describe constructions of posets showing that
Theorem~\ref{thm:bounded-expansion} cannot be extended to the classes in grey in Figure~\ref{fig:hierarchy}. 
In Section~\ref{sec:main_proof} we give the proof of Theorem~\ref{thm:bounded-expansion}.

\section{Definitions and preliminaries}
\label{sec:prelim}

\subsection{Posets}\label{sec:posets}
Let us start with basic notions about posets.
All posets considered in this paper are finite.
Elements of a poset $P$ are called \emph{points}.
Points $x,y\in P$ are said to be \emph{comparable} in $P$ if $x\leq y$ or $x\geq y$ in $P$.
Otherwise $x$ and $y$ are \emph{incomparable} in $P$. 
A set of points $C\subseteq P$ is a \emph{chain} in $P$ if the points in $C$ are pairwise comparable.
The \emph{height} of $P$ is the maximum size of a chain in $P$.
We write $x<y$ in $P$ if it holds that $x\leq y$ and $x\neq y$.
For distinct $x,y\in P$, if $x<y$ in $P$ and there is no $z\in P$ with $x<z<y$ in $P$ then $x<y$ is a \emph{cover relation} of $P$.
By $\cover(P)$ we denote the \emph{cover graph} of $P$, the (undirected) graph defined on the points of $P$ where edges correspond to cover relations of $P$.

A \emph{linear extension} $L$ of $P$ is a linear order on the ground set of $P$ such that $x\leq y$ in $L$ whenever $x \leq y$ in $P$.
Linear extensions $L_1,\ldots,L_d$ form a \emph{realizer} of $P$ if their intersection is equal to $P$, that is, $x\leq y$ in $P$ if and only if $x\leq y$ in $L_i$ for each $i\in\set{1,\ldots,d}$.
The \emph{dimension} of $P$, denoted by $\dim(P)$, is the least number $d$ such that there is a realizer of $P$ of size $d$.

We let $\Inc(P)=\{(x,y) \in P\times P\mid x \textrm{ is incomparable to $y$ in } P\}$ denote the set of ordered pairs of incomparable points in $P$. 
A set $I \subseteq \Inc(P)$ of incomparable pairs is \emph{reversible} if there is a linear extension $L$ of $P$ that reverses each pair in $I$, that is, $y<x$ in $L$ for every $(x,y)\in I$.

We can rephrase the definition of dimension as follows: Assuming $\Inc(P) \neq \emptyset$, the dimension of $P$ is the least positive integer $d$ for which there exists a partition of $\Inc(P)$ into $d$ reversible sets.  
(Note that $\dim(P)=1$ in case $\Inc(P)=\emptyset$.)   
In light of this statement, it is handy to have a simple criterion to decide when a set of incomparable pairs is reversible. 
This motivates the following definition. 
A sequence $(x_1,y_1), \dots, (x_k,y_k)$ of  $k \geq 2$ pairs from $\Inc(P)$  such that $x_i\leq y_{i+1}$ holds in $P$ for all $i\in\set{1,\ldots,k}$ (cyclically) is called an \emph{alternating cycle}. 
It is well known (and easy to show) that a set $I$ of incomparable pairs is reversible if and only if $I$ does not contain any alternating cycle (see e.g.\ \cite{Tro-book}).  
Indeed, this is the only fact about dimension that we will need in our proof. 

\subsection{Sparse graph classes}
Next, we introduce the necessary definitions regarding graphs and give proper definitions for
the graph classes mentioned in the introduction.
All graphs in this paper are finite, simple, and undirected.
Given a graph $G$ we denote by $V(G)$ and $E(G)$ the vertex set and edge set of $G$, respectively. 
$H$ is a {\em subgraph} of $G$ if $V(H) \subseteq V(G)$ and $E(H) \subseteq V(G)$. 
For a subset $X\subset V(G)$ we denote by $G[X]$ the subgraph of $G$ with vertex $X$ and all edges with both endpoints in $X$.    
The \emph{distance} between two vertices in $G$ is the length of a shortest path between them.
(Thus adjacent vertices are at distance $1$; also, distance between two vertices in distinct components of $G$ is set to $+\infty$.)
The set of all vertices at distance at most $r$ from vertex $v$ in $G$ is denoted by $N^r_G(v)$, and the subscript is omitted if $G$ is clear from the context.
The \emph{radius} of a connected graph $G$ is the least integer $r \geq 0$ for which there is a vertex $v\in V(G)$ such that $N^r_G(v)=V(G)$.

The \emph{treewidth} of a graph $G$, denoted by $\tw(G)$,  is the least integer $t \geq 0$ for which there is a tree $T$ and a family $\set{T(v)\mid v\in V(G)}$ of non-empty subtrees of $T$ such that
$\norm{\set{v\in V(G) \mid x\in T(v)}}\leq t+1$ for each node $x$ of $T$, and
$V(T(u)) \cap V(T(v)) \neq\emptyset$ for each edge $uv\in E(G)$.
A class of graphs $\calC$ has \emph{locally bounded treewidth} if there exists a function $f:\R\to \R$ such that
$\tw(G[N^r(v)])\leq f(r)$ for every integer $r\geq 0$, graph $G\in\calC$ and vertex $v\in V(G)$.

Given a partition $\calX$ of the vertices of a graph $G$ into non-empty parts inducing connected subgraphs,
we denote by $G/\calX$ the graph with vertex set $\calX$ and edge set defined as follows:
For two distinct distinct parts $X, Y \in \calX$, we have $XY\in E(G/\calX)$ if and only if there exist $x\in X$ and $y\in Y$ such that $xy\in E(G)$.
A graph $H$ is a \emph{minor} of $G$ if $H$ is isomorphic to a subgraph of $G/\calX$ for some such partition $\calX$ of $V(G)$.
A specialization of this notion is that of topological minors: $H$ is a \emph{topological minor} of a graph $G$
if $G$ contains a subgraph isomorphic to a subdivision of $H$. 
(A subdivision of $H$ is any graph that can be obtained from $H$ by replacing each $uv$ with a path $P_{uv}$ having $u$ and $v$ as endpoints and whose internal vertices are new vertices of the graph, that is, the paths $P_{uv}$ ($uv \in E(H)$) are internally vertex-disjoint.)    
A class of graphs $\calC$ is \emph{minor closed} (\emph{topologically closed}) if every minor (topological minor, respectively) of a graph in $\calC$ is also in $\calC$.

We pursue with the definitions of classes with bounded expansion and nowhere dense classes.
A graph $H$ is a \emph{depth-$r$ minor} (also known as an \emph{$r$-shallow minor})  of a graph $G$
if $H$ is isomorphic to a subgraph of $G/\calX$ for some partition $\calX$ of $V(G)$ into
non-empty parts inducing subgraphs of radius at most $r$.
The \emph{greatest reduced average density} (\emph{grad}) of \emph{rank $r$} of a graph $G$, denoted by $\nabla_r(G)$, is defined as
$\nabla_r(G)=\max\left\{\frac{\norm{E(H)}}{\norm{V(H)}} \mid \textrm{$H$ is a depth-$r$ minor of $G$}\right\}$.
A class of graphs $\calC$ has \emph{bounded expansion}  if there exists a function $f:\R\to \R$ such that
$\nabla_r(G)\leq f(r)$ for every integer $r\geq 0$ and graph $G\in\calC$.
A class of graphs $\calC$ is \emph{nowhere dense} if for each integer $r\geq 0$ there exists a graph
which is {\em not} a depth-$r$ minor of any graph $G\in\calC$.

It is easy to see that classes with locally bounded treewidth and classes with bounded expansion are nowhere dense.
Note however that these two notions are incomparable.
Two classical examples of classes with locally bounded treewidth are graphs with bounded maximum degree, and graphs excluding some apex graph $A$ as a minor.
The latter is in fact a characterization of {\em minor-closed} classes with locally bounded treewidth:
A minor-closed class $\calC$ has locally bounded treewidth if and only if $\calC$ excludes some apex graph,
a fact originally proved by Eppstein~\cite{Eppstein-lbtw}.

Let us now define almost wide classes.
For $d\geq1$, a set of vertices $X$ in a graph $G$ is \emph{$d$-independent} if every two distinct vertices in $X$ are at distance strictly greater than $d$ in $G$.
A class of graphs $\calC$ is \emph{almost wide}
if there exists an integer $s \geq 0$ such that
for every integer $d\geq 1$ there is a function $f:\R\to \R$ such that
for every integer $m\geq 1$, every graph $G\in\calC$ of order at least $f(m)$ contains a subset $S$ of size at most $s$ so that $G-S$ has a $d$-independent set of size  $m$.
Ne\v{s}et\v{r}il and Ossona de Mendez~\cite[Theorem~3.23]{NOdM10} proved that a class of graphs excluding a fixed graph as a topological minor is almost wide.
Moreover, they proved~\cite[Theorem~3.13]{NOdM10} that every \emph{hereditary} class of graphs that is almost wide is also nowhere dense.
(Recall that a class is hereditary if it is closed under taking induced subgraphs.)

\subsection{Posets with cover graphs in a nowhere dense class}
As mentioned in the introduction, the statement of Theorem~\ref{thm:bounded-expansion} cannot be pushed further towards nowhere dense classes.
In fact, it already fails for classes with locally bounded treewidth and hereditary classes that are almost wide, as we now show.
Our construction is based on the class of graphs $G$ with $\Delta(G) \leq \girth(G)$, where
$\Delta(G)$ and $\girth(G)$ denote the maximum degree and girth of $G$, respectively.
This is a useful example of a hereditary class with locally bounded treewidth that is also almost wide but that does not have bounded expansion.
(Indeed, this class is invoked several times in the textbook~\cite{NOdM-book}.)
We will use in particular that the chromatic number of these graphs is unbounded.
This is a well-known fact that can be shown in multiple ways;
we can note for instance that the chromatic number of the $n$-vertex $d$-regular non-bipartite Ramanujan graphs with girth $\Omega_{d}(\log n)$ built by Lubotzky,  Phillips, and Sarnak~\cite{LPS88} have chromatic number
$\Omega(\sqrt{d})$ (see~\cite{LPS88}).

\begin{proposition}\label{prop:locally-bounded-tw}
There exists a hereditary almost wide class of graphs $\calC$ with locally bounded treewidth
such that posets of height $2$ with cover graphs in $\calC$ have unbounded dimension.
\end{proposition}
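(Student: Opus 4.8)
The plan is to realize high-dimensional posets of height~$2$ using cover graphs taken from the class $\calC$ of graphs $G$ with $\Delta(G) \le \girth(G)$, whose relevant properties (hereditary, almost wide, locally bounded treewidth) are recorded above. The key leverage point is that graphs in $\calC$ have unbounded chromatic number, so it suffices to exhibit, for every graph $G \in \calC$, a height-$2$ poset whose cover graph is (isomorphic to an induced subgraph of, or at least a member of) $\calC$ and whose dimension is at least the chromatic number of~$G$ — or some unbounded function thereof. A clean candidate is a variant of the incidence poset: take $P_G$ with minimal points the vertices $V(G)$ and maximal points the edges $E(G)$, with $v < e$ in $P_G$ exactly when $v$ is an endpoint of~$e$. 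This has height~$2$, and its cover graph is the $1$-subdivision of $G$. So the first step is to verify that the $1$-subdivision of a graph in $\calC$ is again in $\calC$ (or at least lies in some fixed hereditary, almost wide, locally bounded treewidth class): subdividing does not decrease girth and does not increase maximum degree beyond~$2$ at the new vertices, so if $G \in \calC$ then its subdivision has maximum degree $\le \max(\Delta(G),2)$ and girth $\ge \girth(G)$; as long as $\girth(G) \ge 2$ this is fine, and we may restrict attention to such~$G$. We should also check this subdivided-graph class is still hereditary with locally bounded treewidth and almost wide — which follows because it is contained in the class of all $G'$ with $\Delta(G') \le \girth(G')$ up to the minor nuisance of the degree-$2$ vertices, and that ambient class already has the three properties by the discussion preceding the proposition.

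The heart of the argument is the dimension lower bound: $\dim(P_G) \ge \log_2 \chi(G)$, or more simply that $\dim(P_G)$ is unbounded over $G \in \calC$. The standard route is the encoding argument behind $\dim(I_{K_n}) \ge \log\log n$ alluded to in the introduction, but here we want to use chromatic number directly. I would argue: a realizer of $P_G$ by $d$ linear extensions $L_1,\dots,L_d$ assigns to each vertex $v$ a "signature" describing, for each edge $e$ incident to~$v$ and each index~$i$, whether $v$ appears below $e$ in $L_i$; using the alternating-cycle / reversibility characterization from Section~\ref{sec:posets}, a proper partition of $\Inc(P_G)$ into $d$ reversible sets $I_1,\dots,I_d$ gives, for each incomparable pair $(v,e)$ with $v \notin e$, an index $\alpha(v,e)$ with $(v,e)\in I_{\alpha(v,e)}$. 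For adjacent $u,w$ with common edge $uw$, consider the pairs $(u,uw)$ and $(w, uw)$ — one checks that these two pairs, together with similar pairs, cannot all receive the same index without creating an alternating cycle, so one extracts from the partition a colouring of $V(G)$ with roughly $2^d$ colours in which adjacent vertices get distinct colours. Hence $\chi(G) \le 2^d$, i.e. $d \ge \log_2 \chi(G)$, and since $\chi$ is unbounded on $\calC$ we are done. The cleanest version of this is likely to observe directly that for each edge $uw$ the pair of incomparable pairs $\{(u,uw),(w,uw)\}$ forces different "local behaviour", and to package the behaviour of $v$ across its incident edges into an element of a set of size $2^d$.

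The main obstacle I anticipate is making the "extract a proper colouring from a realizer" step precise and getting the dependence right — in particular, ensuring that the colour assigned to a vertex $v$ is genuinely well-defined (does not depend on which incident edge we look through) and that adjacent vertices really are forced apart. Getting a bound of the form $\chi(G) \le g(d)$ for \emph{some} function $g$ is all that is needed, so I would not worry about optimizing the constant; the safe approach is to define, for each vertex $v$, a colour recording for every index $i \in \{1,\dots,d\}$ whether $v$ lies below \emph{all} of its incident edges in $L_i$, below \emph{none}, or a mixed pattern, but since we only need boundedness the crude "$2^d$ patterns" suffices once we verify adjacency forces a clash via an alternating-cycle argument. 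A secondary, more routine obstacle is confirming that the subdivided class inherits "almost wide" — but since it sits inside a hereditary almost wide class and almost-wideness passes to subclasses (the definition quantifies "every graph $G \in \calC$ of order at least $f(m)$"), this is immediate. Assembling these pieces yields the proposition.
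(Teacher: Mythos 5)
Your choice of underlying class is the same as the paper's (graphs $G$ with $\Delta(G)\le\girth(G)$), and your observations that this class is hereditary, almost wide, has locally bounded treewidth, has unbounded chromatic number, and is closed under the relevant cover-graph construction are all fine. The gap is in the heart of the argument: the dimension lower bound for the \emph{incidence} poset $I_G$. The pairs $(u,uw)$ and $(w,uw)$ that you propose to feed into the alternating-cycle machinery are \emph{comparable} pairs of $I_G$ (an endpoint lies below its edge by definition), so they do not belong to $\Inc(I_G)$, receive no index $\alpha$, and cannot participate in an alternating cycle. For the same reason, the ``signature'' you suggest --- recording for each $i$ whether $v$ lies below its incident edges in $L_i$ --- is constant: every linear extension puts $v$ below every edge containing it, so all vertices get the same colour and no bound on $\chi(G)$ follows. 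Worse, the specific inequality you aim for, $\dim(I_G)\ge\log_2\chi(G)$, is false: as the paper's introduction recalls, $\dim(I_{K_n})=O(\log\log n)$, while $\log_2\chi(K_n)=\log_2 n$. So any lower bound on $\dim(I_G)$ in terms of $\chi(G)$ would have to be at most doubly logarithmic, and you give no argument that even such a weak bound holds for the triangle-free, high-girth graphs in $\calC$; this is a genuine missing ingredient, not a detail to be routinely filled in.

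The paper sidesteps all of this by using the \emph{adjacency} poset instead: $P_G$ has points $a_v,b_v$ for $v\in V(G)$ with $a_u<b_v$ if and only if $uv\in E(G)$. The crucial difference is that $a_v$ and $b_v$ are \emph{incomparable}, so for any realizer $L_1,\dots,L_d$ one may set $\phi(v)=i$ where $b_v<a_v$ in $L_i$; if $uv\in E(G)$ and $\phi(v)=i$ then $a_u\le b_v<a_v\le b_u$ in $L_i$, so $\phi(u)\ne i$, giving $\dim(P_G)\ge\chi(G)$ in two lines. The price is that the cover graph of $P_G$ is no longer a subdivision of $G$, so one must check separately that it stays in $\calC$ (same maximum degree; girth does not drop because a cycle in $\cover(P_G)$ projects to a closed walk in $G$ in which any three consecutive vertices are distinct). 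If you replace $I_G$ by $P_G$ and carry out that check, your proof goes through; as written, it does not.
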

\begin{proof}
For a graph $G$, the \emph{adjacency poset} $P_G$ of $G$ is the poset with point set $\set{a_v \mid v\in V(G)}\cup\set{b_v \mid v\in V(G)}$ such that, for every two distinct vertices $u, v \in V(G)$, we have
$a_u \leq b_v$ in $P_G$ if and only if $uv\in E(G)$.
It is well known that $\dim(P_G) \geq \chi(G)$, see~\cite{FT00}.  
This can be seen as follows.
Fix a realizer $L_1,\ldots,L_d$ of $P_G$.
For every vertex $v\in V(G)$, fix a number $\phi(v)=i$ such that $b_v < a_v$ in $L_i$.
We claim that $\phi$ is a proper coloring of $G$.
Consider any two adjacent vertices $u$ and $v$ in $G$ and, say, $\phi(v)=i$.
Then $a_u \leq b_v < a_v \leq b_u$ in $L_i$, which witnesses that $\phi(u)\neq i$.
Therefore, $\dim(P_G) \geq \chi(G)$.

Now let $\calC$ denote the class of graphs $G$ satisfying  $\Delta(G) \leq \girth(G)$.
As mentioned earlier, this class is hereditary, almost wide, and has locally bounded treewidth.
This is not difficult to check (or see~\cite{NOdM-book} for a proof).

The key observation about the class $\calC$ in this context
is that if $G \in \calC$, then  $\cover(P_G) \in \calC$.
This can be seen as follows: First, clearly $\Delta(G) = \Delta(\cover(P_G))$, so it is enough to show
$\girth(G) \leq \girth(\cover(P_G))$.
To show the latter, we remark that if $C$ is a cycle of $\cover(P_G)$, then
$C$ naturally corresponds to a closed walk $W$ in $G$ of the same length.
Moreover, every three consecutive vertices in that walk $W$ are pairwise distinct, as follows from the adjacency poset construction.
Hence, $W$ contains a cycle, which is of length at most that of $C$.
Therefore, $\girth(G) \leq \girth(\cover(P_G))$, as claimed.

To summarize, graphs in $\calC$ have unbounded chromatic number, implying that adjacency posets of
these graphs have unbounded dimension.
Yet, the cover graphs of these adjacency posets all belong to $\calC$, a  hereditary almost wide class with locally bounded treewidth.
This concludes the proof.
\end{proof}

\section{Proof of main theorem}
\label{sec:main_proof}

There are a number of equivalent conditions for classes of graphs to have bounded expansion (see \cite{NOdMW12} for instance).
As mentioned in the introduction, we use the following characterization in terms of $p$-centered colorings: 
A class $\calC$ has bounded expansion if and only if there exists a function $f:\N \to \N$ such that for every integer $p \geq 1$ and every graph $G\in \calC$ there is a $p$-centered coloring of $G$ using at most $f(p)$ colors.
Thus, the following theorem implies Theorem~\ref{thm:bounded-expansion}.

\begin{theorem}\label{thm:p-centered} 
If $P$ is a poset of height $h$ and its cover graph has a $2h$-centered coloring using $c$ colors, then
\[
\dim(P) \leq 2^{2(c+1)^h}. 
\]
\end{theorem}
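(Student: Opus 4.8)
The plan is to follow the strategy sketched in the proof overview: fix a $2h$-centered coloring $\phi$ of $\cover(P)$ with $c$ colors, and analyze the upward reachability structure of $P$ relative to $\phi$. For a point $x$ and a point $y$ with $x\le y$ in $P$, I will record a \emph{covering chain} from $x$ to $y$, i.e.\ a path $x=z_1<z_2<\cdots<z_k=y$ in the diagram, and its associated \emph{color sequence} $\sigma=(\phi(z_1),\ldots,\phi(z_k))$. Since $P$ has height $h$, every such chain has at most $h$ points, so there are at most $N:=\sum_{i=1}^{h}c^i < h c^h$ possible color sequences. For a fixed color sequence $\sigma$ and a point $x$, let $\Up_\sigma(x)$ denote the set of points $y$ reachable from $x$ by a covering chain with color sequence exactly $\sigma$. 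The first main step is to understand how these sets interact. The key structural claim I would aim to prove — this is where the $2h$-centered property is used crucially, and I expect it to be the main obstacle — is that for a carefully chosen family derived from the sets $\Up_\sigma(x)$, any two members are either nested or disjoint; that is, the family is \emph{laminar}. The intuition is that a connected subgraph of $\cover(P)$ formed by the union of two overlapping covering-chain-closures uses few colors (at most $h$ per chain, hence at most $2h$ overall in the relevant configurations), so by $2h$-centeredness some color appears exactly once, and that uniquely-colored vertex acts as a ``cut point'' forcing the nesting. Making this precise — deciding exactly which subsets to put into the laminar family, and handling the bookkeeping over subsets of the set of color sequences — is the technical heart; I would follow the Reidl–Villaamil–Stavropoulos method referenced in the overview, adapting their colored-upset argument to covering chains rather than arbitrary paths.

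Once the laminar families are established, the second step is to convert the tree structure of a laminar family into a bound on dimension via the alternating-cycle criterion. Recall from Section~\ref{sec:posets} that it suffices to partition $\Inc(P)$ into few sets, each containing no alternating cycle. For an incomparable pair $(x,y)$, I would attach ``signature'' data: for each subset $\calS$ of the set of color sequences, the position of $x$ (or of the relevant witness point on a covering chain into $x$) in the tree of the laminar family associated with $\calS$, recorded at the appropriate coarseness (e.g.\ the identity of the smallest laminar set in that family containing a designated point, or a comparison of two such sets). Two incomparable pairs receiving the same signature should then be ``compatible'' in the sense that they cannot both lie on a common alternating cycle; equivalently, the set of pairs with a fixed signature is reversible. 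The number of distinct signatures is a product, over the (at most $2^N$) subsets $\calS$, of boundedly many options, and the nesting property of laminar families keeps each factor polynomial in the height; carrying out this count is routine arithmetic once the signatures are defined correctly, and it is what produces the doubly-exponential bound.

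Finally I would verify the explicit constant. With at most $N<hc^h$ color sequences, hence at most $2^{N}$ subsets $\calS$, and with each subset contributing at most roughly $2h^{h}$ many ``tree-position'' outcomes (laminar family on at most $|P|$ leaves but compressed to depth and branching controlled by $h$ and $c$), the total number of signatures is at most $(2h^h)^{N}\cdot(\text{lower order}) \le 2^{2h^{h+1}c^h}$ after absorbing constants into the exponent; pushing the estimates to land exactly at $2^{2h^{h+1}c^h}$ is a matter of being slightly generous at each step. The genuinely delicate part, to repeat, is the laminarity proof — specifically identifying the right family and invoking $2h$-centeredness on the right connected subgraph — since everything downstream (signatures, counting, alternating cycles) is then forced.
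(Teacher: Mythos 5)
Your proposal reproduces the high-level plan already given in the paper's proof overview, but it stops short of the three places where the actual work happens, and at each of them what you sketch would not go through as stated. First, you work with $\phi$ itself, whereas the argument needs the refined coloring $\phi^*(x)=(\phi(x),h(x))$: this makes every signature \emph{proper} (no color repeated along a covering chain), which is what allows two intersecting $\sigma$-covering chains to be spliced at a common point into another $\sigma$-covering chain. That splicing is the engine of Lemma~\ref{lem:neighborhoods} (intersecting $\sigma$-chains have equal $\sigma$-upsets), which in turn is what makes $U_\sigma$ induce an equivalence relation $\sim_\sigma$; without it you cannot even name the laminar family. Second, your intuition for laminarity says the relevant connected subgraph uses ``at most $2h$ colors overall,'' but a $2h$-centered coloring gives nothing for a subgraph using exactly $2h$ colors. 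The paper's Lemma~\ref{lem:laminar} needs the extra observation that the two signatures $\sigma,\sigma'$ involved share at least one color (the common $\phi^*$-value of the starting points, which all lie in one class of each relation), so that only $2h-1$ colors appear and the centered property bites; it also needs the specific choice of family, namely the classes $[x]_\sigma\cap X_{\hat\Sigma}$ for $\sigma\in\hat\Sigma$ restricted to the set $X_{\hat\Sigma}$ of points whose nonempty-upset signature set is exactly $\hat\Sigma$. You explicitly defer both the choice of family and its proof to the cited method, so the ``technical heart'' you identify is genuinely absent.

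Third, the downstream part is not ``forced'' and your counting scheme does not produce the claimed bound. The signature you attach to an incomparable pair must be the binary vector $v(x,y)$ recording, for each $\sigma\in\hat\Sigma$, whether some point of $D_\sigma(y)\cap X_{\hat\Sigma}$ lies to the right of $x$ in the preorder of the laminar tree (Lemma~\ref{lem:downset-left-or-right} and Proposition~\ref{prop:interval}); your alternative of recording ``the identity of the smallest laminar set containing a designated point'' has as many outcomes as the family has sets, which is not bounded in $h$ and $c$, and your statement that the number of signatures is ``a product over the at most $2^N$ subsets'' of boundedly many options would give a triply exponential bound rather than $2^{2h^{h+1}c^h}$ (the correct structure is a sum over $\hat\Sigma$ of $2^{|\hat\Sigma|}$, since each $x$ lies in exactly one $X_{\hat\Sigma}$). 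Finally, proving that each part $\Inc_{\hat\Sigma,v}(P)$ is reversible is not routine arithmetic: it requires a further, direct application of $2h$-centeredness to the union of four covering chains extracted from two consecutive pairs of a hypothetical alternating cycle, again using the shared-color observation to get down to $2h-1$ colors. In short, the approach is the right one, but the proposal is a restatement of the roadmap with the load-bearing constructions and all three invocations of the $2h$-centered hypothesis left unexecuted.
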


This section is devoted to the proof of Theorem~\ref{thm:p-centered}.  
Let thus $P$ be a poset of height $h$, and let $\phi$ be a $2h$-centered coloring of the cover graph of $P$ using colors from the set $\set{1,\ldots,c}$. 

\subsection{Signatures for covering chains}
First, we refine the coloring $\phi$ of the cover graph of $P$ as follows.
For each $x \in P$, let $h(x)$ denote the \emph{height} of $x$ in $P$, that is the size of a longest chain in $P$ ending with $x$.
For each $x\in P$ we define
\[
 \phi^*(x):=(\phi(x),h(x)).
\]
Note that $\phi^*$ is a $2h$-centered coloring as in general any refinement of a $p$-centered coloring is still a $p$-centered coloring, for every $p\geq1$.

We say that $Q=(x_1,x_2,\ldots,x_{\ell})$ is a \emph{covering chain} of $P$ if $x_i<x_{i+1}$ is a cover relation in $P$ for each $i\in\set{1,\ldots,\ell-1}$.
In this case, we also say that $Q$ is a covering chain \emph{from} $x_1$ \emph{to} $x_\ell$.
Each covering chain $Q=(x_1,x_2,\ldots,x_\ell)$ has a \emph{signature} defined by
\[
 (\phi^*(x_1),\ldots,\phi^*(x_\ell)).
\]
When $\sigma=(\phi^*(x_1),\ldots,\phi^*(x_\ell))$ is the signature of $Q$, we also call $Q$ a \emph{$\sigma$-covering chain}.
By our definition of $\phi^*$ we have that the signature of $Q$ is always  \emph{proper}, in the sense that no color appears more than once in the signature.

Let $\Sigma$ be the set of all signatures of covering chains of $P$.
We can bound the size of $\Sigma$ as follows: $|\Sigma| \leq \sum {h \choose k} c^k = (c+1)^h$. 
For each $\sigma\in\Sigma$, we let $X_{\sigma}$ denote the set of all points $x$ in $P$ such that there is a $\sigma$-covering chain starting from $x$.
For illustration, each $x$ in $P$ belongs to $X_{\sigma}$ for $\sigma=((\phi(x),h(x)))$.
For $x$ in $P$, the \emph{$\sigma$-upset} $U_\sigma(x)$ of $x$ is the set of all points $y$ in $P$ such that there is a $\sigma$-covering chain from $x$ to $y$.
Similarly, the \emph{$\sigma$-downset} $D_{\sigma}(y)$ of $y$ is the set of all points $x$ in $P$ such that there is a $\sigma$-covering chain from $x$ to $y$. 

\subsection{Two lemmas on \texorpdfstring{$\sigma$}--upsets}

\begin{lemma}\label{lem:neighborhoods}
	Let $\sigma \in \Sigma$ and let $x,x'\in X_{\sigma}$. 
 If a $\sigma$-covering chain starting in $x$ intersects a $\sigma$-covering chain starting in $x'$, then $U_\sigma(x)=U_\sigma(x')$.
 In particular, if $U_\sigma(x)\cap U_\sigma(x')\neq \emptyset$, then $U_\sigma(x)=U_\sigma(x')$.
\end{lemma}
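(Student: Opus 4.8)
The plan is to fix an arbitrary point $y\in U_\sigma(x)$ and produce a $\sigma$-covering chain from $x'$ to $y$, which gives $U_\sigma(x)\subseteq U_\sigma(x')$; the reverse inclusion then follows by exchanging the roles of $x$ and $x'$ (the hypothesis is symmetric), and the ``in particular'' assertion follows by applying the first part to a $\sigma$-covering chain from $x$ to some $z\in U_\sigma(x)\cap U_\sigma(x')$ together with one from $x'$ to $z$, which meet at $z$.

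So I would fix $\sigma$-covering chains $Q$ starting in $x$ and $Q'$ starting in $x'$ that share a vertex, and a $\sigma$-covering chain $R$ from $x$ to $y$. Let $H$ be the subgraph of $\cover(P)$ obtained as the union of $Q$, $Q'$, and $R$. Then $H$ is connected, since $Q$ and $R$ both contain $x$ while $Q$ and $Q'$ share a vertex by assumption. Crucially, all three chains have the same signature $\sigma$, so every vertex of $H$ has a $\phi^*$-color occurring among the coordinates of $\sigma$; as $|\sigma|\le h$, the coloring $\phi^*$ uses at most $h<2h$ colors on $H$.

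Now I would invoke the fact (noted above) that $\phi^*$ is a $2h$-centered coloring of $\cover(P)$: since $H$ is connected and $\phi^*$ uses fewer than $2h$ colors on it, some color is used exactly once on $H$, at a vertex $v$. Since this color appears among the coordinates of $\sigma$ and $\sigma$ is proper, there is a unique index $k$ whose coordinate in $\sigma$ equals $\phi^*(v)$. Each of $Q$, $Q'$, $R$ has exactly one vertex of that color, namely its $k$-th vertex, and all three of these vertices lie in $H$; by uniqueness they all coincide with $v$. Thus $Q'$ runs from $x'$ (position $1$) to $v$ (position $k$) and $R$ runs from $v$ (position $k$) to $y$ (position $|\sigma|$), so concatenating the first $k$ vertices of $Q'$ with the vertices of $R$ at positions $k+1,\dots,|\sigma|$ yields a sequence of $|\sigma|$ points whose consecutive pairs are cover relations (inherited from $Q'$ and $R$) and whose $j$-th vertex has $\phi^*$-color equal to the $j$-th coordinate of $\sigma$ (consistently at the junction $j=k$). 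This is a $\sigma$-covering chain from $x'$ to $y$, so $y\in U_\sigma(x')$.

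The main thing to get right is the color count: the hypothesis that $\phi^*$ is $2h$-centered is only useful on $H$ because the three covering chains carry the single signature $\sigma$, and hence $H$ uses at most $h$ colors even though it is the union of three paths on up to $3h$ vertices. Everything else is bookkeeping, and properness of $\sigma$ is precisely what forces the uniquely-colored vertex to sit at the same position in all three chains, which is what makes the concatenation well defined.
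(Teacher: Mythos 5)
Your proof is correct and rests on the same two ingredients as the paper's: the union of three $\sigma$-covering chains is connected and receives at most $h<2h$ colors under $\phi^*$, and properness of $\sigma$ lets one splice two chains at a shared vertex into a new $\sigma$-covering chain. The only difference is organizational: the paper argues by contradiction (a witness $y''\in U_\sigma(x')\setminus U_\sigma(x)$ forces two of the chains to be vertex disjoint, so no color is used exactly once, contradicting $2h$-centeredness), whereas you run the same argument forwards, extracting the uniquely colored vertex guaranteed by $2h$-centeredness and splicing the chains there; both are equally valid.
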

\begin{proof}
 Let $Q_{\sigma}^{x,y}$, $Q_{\sigma}^{x',y'}$ be two $\sigma$-covering chains from $x$ to $y$ and from $x'$ to $y'$, respectively.
 Suppose that these two paths share a common point $z$.
 We need to prove that $U_\sigma(x)=U_\sigma(x')$.
 Suppose for contradiction that $U_\sigma(x)\neq U_\sigma(x')$,  and assume without loss of generality that $U_\sigma(x')- U_\sigma(x)\neq \emptyset$.
 Let $y''\in U_\sigma(x') - U_\sigma(x)$. 
 Let $Q_{\sigma}^{x',y''}$ be a $\sigma$-covering chain from $x'$ to $y''$ (see Figure~\ref{fig:neighborhoods} for an illustration).

\begin{figure}[t]
\centering
\includegraphics{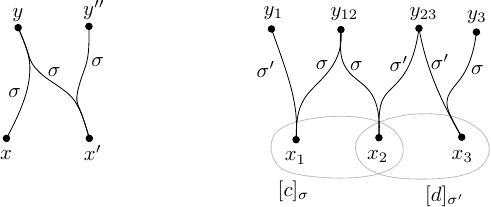}
\caption{Illustration for possible situations in Lemma~\ref{lem:neighborhoods} (left) and Lemma~\ref{lem:laminar} (right).}
\label{fig:neighborhoods}
\end{figure}

 Clearly, the union $U$ of the three paths $Q_{\sigma}^{x,y}$, $Q_{\sigma}^{x',y}$ and $Q_{\sigma}^{x',y''}$ is a connected subgraph of the cover graph of $P$.
 Since these three paths have the same signature, we know that $U$ is colored by $\phi^*$ with at most $h$ colors.
 
 We claim that the two paths $Q_{\sigma}^{x,y}$ and $Q_{\sigma}^{x',y''}$ are vertex disjoint.
 Suppose that $Q_{\sigma}^{x,y}$ and $Q_{\sigma}^{x',y''}$ share a common element $w$. 
 Combining the $x$--$w$ portion of $Q_{\sigma}^{x,y}$ 
 with the $w$--$y''$ portion of $Q_{\sigma}^{x',y''}$ we obtain a covering chain from $x$ to $y''$. 
Using that the signature $\sigma$ is proper, it is easy to see that this covering chain from $x$ to $y''$ also has signature $\sigma$.
 It follows that $y''\in U_\sigma(x)$, contradicting our choice of $y''$.
 Hence, $Q_\sigma^{x,y}$ and $Q_\sigma^{x',y''}$ are vertex disjoint, as claimed. 
 It follows that no color of $\phi^*$ is used exactly once on $U$, contradicting the fact that $\phi^*$ is a $2h$-centered coloring.  
\end{proof}

For $\sigma \in \Sigma$ and points $x,y\in P$, write $x\sim_\sigma y$ if $U_\sigma(x)$ and $U_\sigma(y)$ intersect.
By Lemma~\ref{lem:neighborhoods} we know that $\sim_\sigma$ defines an equivalence relation on $P$;
denote by $[x]_{\sigma}$ the equivalence class of point $x$ with respect to $\sim_\sigma$.

For $\hat\Sigma \subseteq \Sigma$, let  $X_{\hat\Sigma}$ denote the set of points $x$ of $P$ for which $\hat\Sigma$ is exactly the subset of signatures $\sigma \in \Sigma$ 
such that there is a $\sigma$-covering chain starting at $x$, that is, 
\[
X_{\hat\Sigma} := \set{x\in P\mid  \set{\sigma \in \Sigma \mid U_\sigma(x)\neq \emptyset} = \hat\Sigma}.
\]

Let $\calF_{\hat\Sigma}$ be the family of subsets of $X_{\hat\Sigma}$ defined as the union of the equivalence classes of $\sim_\sigma$ for
$\sigma \in\hat \Sigma$ restricted to the set $X_{\hat\Sigma}$. 
That is,
\[
\calF_{\hat\Sigma} := \set{[x]_{\sigma}\cap X_{\hat\Sigma} \mid x\in X_{\hat\Sigma},\ \sigma\in\hat\Sigma}. 
\]
A key observation is that these families are laminar.
Recall that a family $\calF$ is \emph{laminar} if for every $C,D\in\calF$ we have $C\cap D =\emptyset$, or $C\subset D$, or $D\subset C$.

\begin{lemma}\label{lem:laminar}
The family $\calF_{\hat\Sigma}$ is laminar for every $\hat\Sigma\subset\Sigma$.
\end{lemma}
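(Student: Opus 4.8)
The plan is to reduce Lemma~\ref{lem:laminar} to its one non-trivial case and then contradict the $2h$-centeredness of $\phi^*$ by exhibiting a single, carefully assembled connected subgraph of the cover graph on which no color is used exactly once. The trivial comparisons are immediate: a singleton $\set{x}$ is contained in every member of $\calF_{\hat\Sigma}$ that contains $x$ and is disjoint from all others; $X_{\hat\Sigma}$ contains every member; and for a \emph{fixed} signature $\sigma$ the classes $[x]_\sigma\cap X_{\hat\Sigma}$ are pairwise equal or disjoint, because $\sim_\sigma$ is an equivalence relation by Lemma~\ref{lem:neighborhoods}. So the entire content is the following: given $A=[x]_\sigma\cap X_{\hat\Sigma}$ and $B=[x']_{\sigma'}\cap X_{\hat\Sigma}$ with $\sigma\neq\sigma'$ in $\hat\Sigma$ and a common point $z\in A\cap B$, one must show $A\subseteq B$ or $B\subseteq A$. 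Since $z\sim_\sigma x$ and $z\sim_{\sigma'}x'$ we may rewrite $A=[z]_\sigma\cap X_{\hat\Sigma}$ and $B=[z]_{\sigma'}\cap X_{\hat\Sigma}$; and since $z\in X_{\hat\Sigma}$ with $\sigma,\sigma'\in\hat\Sigma$, there are both a $\sigma$-covering chain and a $\sigma'$-covering chain starting at $z$.

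Assume for contradiction that there are $w\in A\setminus B$ and $w'\in B\setminus A$. Then all six upsets $U_\sigma(z),U_\sigma(w),U_\sigma(w'),U_{\sigma'}(z),U_{\sigma'}(w),U_{\sigma'}(w')$ are nonempty (their base points lie in $X_{\hat\Sigma}$ and $\sigma,\sigma'\in\hat\Sigma$); moreover $U_\sigma(w)=U_\sigma(z)$ and $U_{\sigma'}(w')=U_{\sigma'}(z)$ by Lemma~\ref{lem:neighborhoods}, whereas $U_{\sigma'}(w)\cap U_{\sigma'}(z)=\emptyset$ (because $w\notin B$) and $U_\sigma(w')\cap U_\sigma(z)=\emptyset$ (because $w'\notin A$). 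I would then fix witnessing covering chains: a $\sigma$-chain $P_z$ from $z$ to some $y\in U_\sigma(z)=U_\sigma(w)$ and a $\sigma$-chain $P_w$ from $w$ to that \emph{same} $y$; a $\sigma'$-chain $P'_z$ from $z$ to some $y'\in U_{\sigma'}(z)=U_{\sigma'}(w')$ and a $\sigma'$-chain $P'_{w'}$ from $w'$ to $y'$; a $\sigma'$-chain $R$ from $w$ to some $u\in U_{\sigma'}(w)$; and a $\sigma$-chain $S$ from $w'$ to some $v\in U_\sigma(w')$. Let $H\subseteq\cover(P)$ be the union of these six chains. It is connected, since $P_z$ and $P'_z$ meet at $z$, $P_z$ and $P_w$ meet at $y$ (so $R$ hangs off $w$), and $P'_z$ and $P'_{w'}$ meet at $y'$ (so $S$ hangs off $w'$).

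The heart of the argument is then the color analysis. Every chain in $H$ has signature $\sigma$ or $\sigma'$, and along a chain with a proper signature every color that appears is used exactly once; hence the set of colors of $\phi^*$ occurring on $H$ is exactly the union of the colors occurring in $\sigma$ and in $\sigma'$. Since $P_z$ and $P'_z$ both start at $z$, the color $\phi^*(z)$ is the first entry of both $\sigma$ and $\sigma'$, so this union has size at most $\norm{\sigma}+\norm{\sigma'}-1\le 2h-1$ (covering chains have at most $h$ vertices). Next I would show that no color is used exactly once on $H$; the crucial sub-step is that \emph{$P_z$ and $S$ are vertex-disjoint, and likewise $P'_z$ and $R$}. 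Indeed, suppose a vertex $t$ lies on both $P_z$ and $S$. Both are $\sigma$-chains and $\sigma$ is proper, so $\phi^*(t)$ determines the position of $t$ on each, forcing $t$ to occupy the same index on both; then splicing the $z$-to-$t$ prefix of $P_z$ onto the $t$-to-$v$ suffix of $S$ yields a covering chain from $z$ to $v$ whose signature is again $\sigma$, so $v\in U_\sigma(z)$, contradicting $U_\sigma(w')\cap U_\sigma(z)=\emptyset$; the argument for $P'_z$ and $R$ is the same with $\sigma',u$ replacing $\sigma,v$. Granting this, each color occurring in $\sigma$ appears once on $P_z$ and once on $S$ at two distinct vertices, hence at least twice in $H$, and similarly each color occurring in $\sigma'$ appears at least twice in $H$ (via $P'_z$ and $R$). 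So $H$ is a connected subgraph of the cover graph using fewer than $2h$ colors of $\phi^*$ in which no color is used exactly once --- contradicting that $\phi^*$ is a $2h$-centered coloring. This forces $A\subseteq B$ or $B\subseteq A$, so $\calF_{\hat\Sigma}$ is laminar.

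I expect the main obstacle to be precisely this splicing argument for vertex-disjointness: one has to use properness of the signatures to conclude that a common vertex of two equal-signature chains occupies the same index on both, so that a prefix of one concatenated with a suffix of the other reconstitutes the entire signature. The remaining ingredients are bookkeeping --- notably the observation, forced by the two chains from $z$ sharing a first entry, that $z$, $w$ and $w'$ all carry the same $\phi^*$-value, which is used only to supply the single common color $\phi^*(z)$ that drops the color count of $H$ below $2h$.
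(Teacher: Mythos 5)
Your proposal is correct and follows essentially the same route as the paper's proof: the same six covering chains (your $z,w,w'$ are the paper's $x_2,x_1,x_3$), the same connected union with at most $2h-1$ colors via the shared first entry of $\sigma$ and $\sigma'$, and the same two vertex-disjointness claims forcing every color to appear at least twice. The only cosmetic difference is that where the paper invokes Lemma~\ref{lem:neighborhoods} to get disjointness, you redo the splicing-by-properness argument directly, which is the same mechanism.
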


\begin{proof}
Let $\hat\Sigma \subset\Sigma$ and let $C,D\in\calF_{\hat\Sigma}$.
We need to prove that $C\cap D =\emptyset$, or $C\subset D$, or $D\subset C$. 
Let $c,d\in X_{\hat\Sigma}$ and $\sigma,\sigma' \in \hat\Sigma$ be such that $C=[c]_{\sigma}\cap X_{\hat\Sigma}$ and $D=[d]_{\sigma'}\cap X_{\hat\Sigma}$. 
In order to get a contradiction, suppose that  $C-D\neq\emptyset$, $C\cap D \neq\emptyset$, and $D-C\neq\emptyset$. 
Thus, there are elements $x_1,x_2,x_3\in X_{\hat\Sigma}$ such that
$x_1 \in [c]_{\sigma} - [d]_{\sigma'}$, $x_2 \in [c]_{\sigma} \cap [d]_{\sigma'}$, and $x_3 \in [d]_{\sigma'}- [c]_{\sigma}$.
It follows that $[x_1]_{\sigma} = [c]_{\sigma} = [x_2]_{\sigma}$ and $[x_2]_{\sigma'}= [d]_{\sigma'} = [x_3]_{\sigma'}$.
Since $x_1,x_2,x_3 \in X_{\hat\Sigma}$ and $\sigma,\sigma'\in \hat\Sigma$ we know that the $\sigma$-upsets and $\sigma'$-upsets of $x_1$, $x_2$ and $x_3$ are not empty.
Hence, there exist $y_{12} \in U_{\sigma}(x_1)=U_{\sigma}(x_2)$, $y_{23} \in U_{\sigma'}(x_2)=U_{\sigma'}(x_3)$, $y_1 \in U_{\sigma'}(x_1)$, and $y_3 \in U_{\sigma}(x_3)$.
Let $Q_{\sigma'}^{x_1,y_1}$ denote a $\sigma'$-covering chain from $x_1$ to $y_1$,  and define $Q_{\sigma}^{x_1,y_{12}}$, $Q_{\sigma}^{x_2,y_{12}}$, $Q_{\sigma'}^{x_2,y_{23}}$, $Q_{\sigma'}^{x_3,y_{23}}$, $Q_{\sigma}^{x_3,y_{3}}$ similarly.

Consider the union of all these paths: 
\[
U:=Q_{\sigma'}^{x_1,y_1}\cup Q_\sigma^{x_1,y_{12}}\cup Q_{\sigma}^{x_2,y_{12}}\cup Q_{\sigma'}^{x_2,y_{23}}\cup Q_{\sigma'}^{x_3,y_{23}}\cup Q_{\sigma}^{x_3,y_3}.
\]
(See Figure~\ref{fig:neighborhoods} for an illustration). 
Clearly, $U$ is a connected subgraph of the cover graph of $P$, and all colors of $\phi^*$ used on $U$ come from $\sigma$ and $\sigma'$.
Both $\sigma$ and $\sigma'$ consist of at most $h$ colors and we know moreover that they share at least one color, namely $\phi^*(x_1)=\phi^*(x_2)=\phi^*(x_3)$.
This means that $U$ is colored by $\phi^*$ with at most $2h-1$ colors.

Next, we show that each color of $\sigma$ appears at least twice on $U$.
To do so, we show that the two paths $Q_\sigma^{x_2,y_{12}}$ and $Q_\sigma^{x_3,y_3}$ are vertex disjoint.
Indeed, if these two paths intersect then by Lemma~\ref{lem:neighborhoods} we would have $U_{\sigma}(x_2) = U_{\sigma}(x_3)$ and therefore $[x_3]_{\sigma} = [x_2]_{\sigma} = [c]_{\sigma}$, contradicting the choice of $x_3$.

Analogously, we argue that each color of $\sigma'$ appears at least twice on $U$ by showing that the two paths $Q_{\sigma'}^{x_1,y_1}$ and $Q_{\sigma'}^{x_2,y_{23}}$ are vertex disjoint.
Indeed, if these two paths intersect then by Lemma~\ref{lem:neighborhoods} we would have $U_{\sigma'}(x_1) = U_{\sigma'}(x_2)$ and therefore $[x_1]_{\sigma'} = [x_2]_{\sigma'} = [d]_{\sigma'}$, contradicting the choice of $x_1$.

This shows that among the at most $2h-1$ colors used by $\phi^*$ on $U$, none appears exactly once, contradicting the fact that $\phi^*$ is a $2h$-centered coloring.
\end{proof}

Using the family $\calF_{\hat\Sigma}$, we define a linear order $\prec_{\hat\Sigma}$ on $X_{\hat\Sigma}$.  
This linear order results from the following recursive procedure applied to $Z=X_{\hat\Sigma}$:  
If no set in $\calF_{\hat\Sigma}$ is a proper subset of $Z$, then order the elements in $Z$ arbitrarily. 
Otherwise, let $S_1,\ldots,S_{\ell}$ denote the inclusion-wise maximal sets in $\calF_{\hat\Sigma}$ that are proper subsets of $Z$.  
Since $\calF_{\hat\Sigma}$ is laminar no two of the sets $S_1,\ldots,S_{\ell}$ intersect. 
Moreover, the union of $S_1,\ldots,S_{\ell}$ is $Z$. 
(Indeed, $S_1 = [x]_{\sigma} \cap X_{\hat\Sigma}$ for some $x\in X_{\hat\Sigma}$ and $\sigma \in \hat\Sigma$. 
Thus, for each $y\in Z-S_1$, the set $Y:= [y]_{\sigma} \cap X_{\hat\Sigma}$ intersects $Z$ and avoids $S_1$, which implies $Y \in \calF_{\hat\Sigma}$ and $y \in Y \subsetneq Z$, by laminarity.)  
We set $x \prec_{\hat\Sigma} y$ for all $1\leq i<j\leq \ell$ and all $x\in S_i$ and $y\in S_j$.
The relative ordering of the elements within $S_i$ ($i\in \{1,\dots, \ell\}$) is then obtained by applying the recursive procedure to $S_i$.  

For distinct elements $x,y\in X_{\hat\Sigma}$ we say that $x$ \emph{lies to the left} of $y$ in $X_{\hat\Sigma}$ if $x\prec_{\hat\Sigma} y$, and \emph{to the right} of $y$ in $X_{\hat\Sigma}$ otherwise. 
A crucial property of $\prec_{\hat\Sigma}$ is that for every $x\in X_{\hat\Sigma}$ and $\sigma \in \hat\Sigma$ the points of $[x]_{\sigma}\cap X_{\hat\Sigma}$ form an interval in $\prec_{\hat\Sigma}$.
We emphasize this property with the following proposition.

\begin{proposition}\label{prop:interval}
	Let $\hat\Sigma \subset\Sigma$, $\sigma\in\hat\Sigma$ and $x_1,x_2, x_3\in X_{\hat\Sigma}$.
	If $x_1\prec_{\hat\Sigma} x_2\prec_{\hat\Sigma} x_3$ and $[x_1]_{\sigma}\cap X_{\hat\Sigma} = [x_3]_{\sigma}\cap X_{\hat\Sigma}$, 
	then $[x_1]_{\sigma}\cap X_{\hat\Sigma}=[x_2]_{\sigma}\cap X_{\hat\Sigma}=[x_3]_{\sigma}\cap X_{\hat\Sigma}$.
\end{proposition}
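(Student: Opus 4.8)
The plan is to reduce the statement to the elementary fact that in a preorder traversal of a rooted tree the leaves lying below a fixed node are visited consecutively. First I would set $S := [x_1]_{\sigma}\cap X_{\hat\Sigma}$, which by hypothesis also equals $[x_3]_{\sigma}\cap X_{\hat\Sigma}$. By the definition of $\calF_{\hat\Sigma}$ and Lemma~\ref{lem:laminar}, $S$ is a vertex of the tree $T_{\hat\Sigma}$, and, as recorded just before the proposition, the singletons $\set{x}$ with $x\in S$ are exactly the leaves of the subtree of $T_{\hat\Sigma}$ rooted at $S$.

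Next I would observe that the chosen preorder traversal of $T_{\hat\Sigma}$ has the property that once it enters a subtree it visits every vertex of that subtree before continuing; this is immediate from the recursive description of a preorder traversal (visit the root, then recurse on the children in order), or by a short induction on the depth of $S$. In particular, the leaves of the subtree rooted at $S$ are visited during one contiguous stretch of the traversal, so the corresponding elements of $X_{\hat\Sigma}$ form an interval of $\prec_{\hat\Sigma}$. Since $\set{x_1}$ and $\set{x_3}$ are both leaves of this subtree and $x_1\prec_{\hat\Sigma} x_2\prec_{\hat\Sigma} x_3$, the singleton $\set{x_2}$ also lies in this interval, hence $\set{x_2}$ is a leaf of the subtree rooted at $S$, that is, $x_2\in S$.

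Finally, from $x_2\in S = [x_1]_{\sigma}\cap X_{\hat\Sigma}$ we get in particular $x_2\sim_{\sigma} x_1$, so $[x_2]_{\sigma}=[x_1]_{\sigma}$ by Lemma~\ref{lem:neighborhoods}; intersecting with $X_{\hat\Sigma}$ yields $[x_2]_{\sigma}\cap X_{\hat\Sigma}=[x_1]_{\sigma}\cap X_{\hat\Sigma}$, and together with the hypothesis this gives the full chain of equalities. The only step requiring a word of justification is the ``contiguous stretch'' claim about preorder traversals, which is standard and which I would dispatch in one line. I do not expect any genuine obstacle here: the substantive work has already been carried out in Lemmas~\ref{lem:neighborhoods} and~\ref{lem:laminar} and in setting up the tree $T_{\hat\Sigma}$, and the proposition merely makes the interval property of $\prec_{\hat\Sigma}$ explicit for later use.
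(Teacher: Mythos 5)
Your proof is correct and follows exactly the route the paper intends: the paper states Proposition~\ref{prop:interval} without a separate proof, treating it as an immediate consequence of the facts that the singletons of $[x]_{\sigma}\cap X_{\hat\Sigma}$ are precisely the leaves of the subtree of $T_{\hat\Sigma}$ rooted at that set and that a preorder traversal visits the vertices of any subtree consecutively. Your write-up just makes this explicit (including the check that $x_1,x_3$ lie in $S$ and the final appeal to $\sim_\sigma$ being an equivalence relation), so there is nothing to add.
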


\subsection{Partitioning the incomparable pairs}
Using the tools developed so far, we now show how to partition the set of incomparable pairs of $P$ into a number of reversible sets that depends only on $h$.  
For $\hat\Sigma \subseteq \Sigma$ let 
$\Inc_{\hat\Sigma}(P) := \set{(x,y)\in\Inc(P) \mid x\in X_{\hat\Sigma},\ y\in P}$.
Clearly,
\[
\Inc(P)=\bigcup_{\hat\Sigma\subseteq\Sigma}\Inc_{\hat\Sigma}(P).
\]
Since $|\Sigma|\leq (c+1)^h$, there are at most $2^{(c+1)^h}$ sets in the above partition of $\Inc(P)$. 
Note that this partition of $\Inc(P)$ is based solely on properties of the first element of pairs in $\Inc(P)$. 
We are going to refine this partition by considering properties of the second element of pairs in $\Inc(P)$. 
We need the following observation: 

\begin{lemma}\label{lem:downset-left-or-right}
Let $\hat\Sigma \subseteq \Sigma$, $\sigma\in \hat\Sigma$, and $(x,y) \in \Inc_{\hat\Sigma}(P)$. 
Then no two points $x_1,x_2 \in D_{\sigma}(y) \cap X_{\hat\Sigma}$ are such that $x_1 \prec_{\hat\Sigma} x \prec_{\hat\Sigma} x_2$.
\end{lemma}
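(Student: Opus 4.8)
I want to show that the set $D_\sigma(y) \cap X_{\hat\Sigma}$ cannot straddle $x$ in the order $\prec_{\hat\Sigma}$, i.e.\ it cannot contain points on both sides of $x$. Suppose for contradiction that $x_1, x_2 \in D_\sigma(y) \cap X_{\hat\Sigma}$ with $x_1 \prec_{\hat\Sigma} x \prec_{\hat\Sigma} x_2$. The key structural fact to exploit is Proposition~\ref{prop:interval}: since $[x_1]_\sigma \cap X_{\hat\Sigma}$ and $[x_2]_\sigma \cap X_{\hat\Sigma}$ are both intervals in $\prec_{\hat\Sigma}$ and $x_1 \prec_{\hat\Sigma} x \prec_{\hat\Sigma} x_2$, the only way $x$ can fail to be equivalent to $x_1$ (or $x_2$) under $\sim_\sigma$ is if these two equivalence classes are distinct. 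So first I would split into two cases: either $x \sim_\sigma x_1$ (equivalently $x \sim_\sigma x_2$, by the interval property if the classes coincide) or not.

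\textbf{Case 1: the classes are distinct.} Then $[x_1]_\sigma \cap X_{\hat\Sigma}$ and $[x_2]_\sigma \cap X_{\hat\Sigma}$ are disjoint intervals with $x$ lying strictly between them, so $x \notin [x_1]_\sigma$ and $x \notin [x_2]_\sigma$; in particular $U_\sigma(x)$ (which is nonempty since $x \in X_{\hat\Sigma}$ — wait, I should check $\sigma \in \hat\Sigma$, which follows because $x_1 \in X_{\hat\Sigma}$ has $U_\sigma(x_1) \neq \emptyset$) is disjoint from both $U_\sigma(x_1)$ and $U_\sigma(x_2)$. Now I build a connected subgraph of the cover graph. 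Since $x_1, x_2 \in D_\sigma(y)$, there are $\sigma$-covering chains $Q^{x_1,y}_\sigma$ and $Q^{x_2,y}_\sigma$, both ending at $y$, hence their union is connected. Also take a $\sigma$-covering chain $Q^{x,z}_\sigma$ starting at $x$ for some $z \in U_\sigma(x)$. The union $U$ of these three chains — is it connected? The first two share $y$; but $Q^{x,z}_\sigma$ need not touch them. This is the gap I must close.

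\textbf{The main obstacle.} The difficulty is exactly forcing a connected configuration that uses at most $2h-1$ colors with no color appearing exactly once, so as to contradict $\phi^*$ being a $2h$-centered coloring. The clean approach: instead of using an arbitrary upset representative for $x$, use that $x$ is incomparable to $y$ and both $x_1, x_2$ lie below $y$ via $\sigma$-chains. I would take the chains $Q^{x_1,y}_\sigma$ and $Q^{x_2,y}_\sigma$ (union connected through $y$, using only the $\le h$ colors of $\sigma$) and argue they are vertex-disjoint: if they shared a vertex $w$, then by Lemma~\ref{lem:neighborhoods} applied at $w$ we would get $U_\sigma(x_1) = U_\sigma(x_2)$, so $[x_1]_\sigma = [x_2]_\sigma$, and then by Proposition~\ref{prop:interval} $x$ would be in that same class — but we are in the case where it is not, or else (Case 2) we handle it separately. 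Given disjointness, every color of $\sigma$ appears at least twice on $Q^{x_1,y}_\sigma \cup Q^{x_2,y}_\sigma$. But this connected subgraph uses at most $h \le 2h-1$ colors (for $h \ge 1$), none exactly once — already a contradiction with $\phi^*$ being $2h$-centered! So in fact I do not even need $x$'s chain for Case 1; the straddling hypothesis is used only to invoke Proposition~\ref{prop:interval} to rule out $[x_1]_\sigma = [x_2]_\sigma$.

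\textbf{Case 2: $x \sim_\sigma x_1$ (hence $\sim_\sigma x_2$).} Then $x \in [x_1]_\sigma = [x_2]_\sigma$, so $U_\sigma(x)$ meets both $U_\sigma(x_1)$ and $U_\sigma(x_2)$, hence (Lemma~\ref{lem:neighborhoods}) $U_\sigma(x) = U_\sigma(x_1) = U_\sigma(x_2)$. Pick any $z \in U_\sigma(x)$; then $z \in U_\sigma(x_1)$, so there is a $\sigma$-covering chain from $x_1$ to $z$, and $x_1 \in D_\sigma(y)$ gives a $\sigma$-covering chain from $x_1$ to $y$; these share $x_1$, so by Lemma~\ref{lem:neighborhoods} $U_\sigma(x_1) = U_\sigma(z)$... hmm, that's going the wrong way. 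Better: $z \in U_\sigma(x)$ and also $y$ — is $y \in U_\sigma(x)$? We have $x_1 \in D_\sigma(y)$ means $y \in U_\sigma(x_1) = U_\sigma(x)$, so there is a $\sigma$-covering chain from $x$ to $y$, forcing $x < y$ or $x = y$ in $P$. But $(x,y) \in \Inc(P)$ means $x$ and $y$ are incomparable — contradiction. So Case 2 is immediate: $x_1 \in D_\sigma(y) \cap [x]_\sigma$ would put $y \in U_\sigma(x)$, contradicting incomparability of $x$ and $y$. This finishes the proof; the genuinely delicate point was recognizing that the two cases are driven by Proposition~\ref{prop:interval}, and that in Case~1 the disjointness-of-chains argument (mirroring Lemmas~\ref{lem:neighborhoods} and~\ref{lem:laminar}) yields the contradiction without needing any chain out of $x$.
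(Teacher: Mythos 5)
Your Case~2 is, in substance, exactly the paper's proof: since $x_1 \in D_\sigma(y)$ we get $y \in U_\sigma(x_1) = U_\sigma(x)$, hence $x \le y$ in $P$, contradicting $(x,y) \in \Inc(P)$. The problem is Case~1, which as written is incoherent. You take the two $\sigma$-covering chains $Q_\sigma^{x_1,y}$ and $Q_\sigma^{x_2,y}$, observe that their union is ``connected through $y$'', and then argue that they are vertex-disjoint; both cannot hold, and the second is simply false --- the two chains share the vertex $y$ by construction. Consequently the configuration you color-count in Case~1 (a \emph{connected} union of two \emph{disjoint} $\sigma$-chains with no color used exactly once) never exists, and the contradiction you extract from the $2h$-centered coloring there is spurious. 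What rescues the overall argument is that Case~1 is vacuous: by your own appeal to Lemma~\ref{lem:neighborhoods}, the intersection of the two chains at $y$ already forces $U_\sigma(x_1)=U_\sigma(x_2)$, i.e.\ $[x_1]_\sigma=[x_2]_\sigma$, unconditionally.

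Once you notice this, the whole proof collapses to the paper's three lines: $y \in U_\sigma(x_1)\cap U_\sigma(x_2)$ gives $[x_1]_\sigma=[x_2]_\sigma$; Proposition~\ref{prop:interval} together with $x_1 \prec_{\hat\Sigma} x \prec_{\hat\Sigma} x_2$ puts $x$ in that same class; hence $y \in U_\sigma(x)$ and $x\le y$ in $P$, a contradiction. No centered-coloring or disjoint-paths argument is needed in this lemma at all. A smaller slip in your Case~1: from $[x_1]_\sigma \ne [x_2]_\sigma$ and $x_1 \prec_{\hat\Sigma} x \prec_{\hat\Sigma} x_2$ you cannot conclude that $x$ lies in neither class --- one of the two intervals could well contain $x$ --- but this plays no role since the case is empty.
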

\begin{proof}
Suppose to the contrary that there are points $x_1,x_2 \in D_{\sigma}(y) \cap X_{\hat\Sigma}$ such that $x_1 \prec_{\hat\Sigma} x \prec_{\hat\Sigma} x_2$.
Since $y\in U_{\sigma}(x_1) \cap U_{\sigma}(x_2)$, we have $[x_1]_{\sigma} = [x_2]_{\sigma}$.
But by~Proposition~\ref{prop:interval} this yields $[x_1]_{\sigma} = [x]_{\sigma} = [x_2]_{\sigma}$, forcing $x \leq y$ in $P$, a contradiction.
\end{proof}

For each $\hat\Sigma \subset \Sigma$ and $(x,y)\in \Inc_{\hat\Sigma}(P)$ we let $v(x,y)$ be the binary vector $(v_\sigma(x,y))_{\sigma\in\hat\Sigma}$ where

\begin{align*}
v_\sigma(x,y)=\begin{cases}
          0 & \text{ if no point of }D_\sigma(y)\cap X_{\hat\Sigma}\text{ lies to the right of }x \\
          1 & \text{ otherwise}.
         \end{cases}
\end{align*}

Let $$V_{\hat\Sigma} := \{ v(x,y) \mid (x,y) \in \Inc_{\hat\Sigma}(P) \}.$$ 
For $v\in V_{\hat\Sigma}$, let $\Inc_{\hat\Sigma,v}(P) = \set{(x,y)\in \Inc(P) \mid x\in X_{\hat\Sigma}\ \text{and}\ v(x,y)=v}$.
Clearly,
\[
\Inc(P)=\bigcup_{\hat\Sigma\subseteq\Sigma,v\in V_{\hat\Sigma}}\Inc_{\hat\Sigma,v}(P).
\]
We show that each set in this partition of $\Inc(P)$ is reversible.
\begin{lemma}
$\Inc_{\hat\Sigma,v}(P)$ is reversible for every $\hat\Sigma\subset\Sigma$ and $v\in V_{\hat\Sigma}$.
\end{lemma}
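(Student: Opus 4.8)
The plan is to show that $\Inc_{\hat\Sigma,v}(P)$ contains no alternating cycle; by the criterion recalled in Section~\ref{sec:posets}, this suffices. Suppose for contradiction that $(x_1,y_1),\dots,(x_k,y_k)$ is an alternating cycle inside $\Inc_{\hat\Sigma,v}(P)$, so that $x_i \le y_{i+1}$ in $P$ for all $i$ (indices cyclic), and all pairs have the same vector $v = v(x_i,y_i)$. Since each $x_i \in X_{\hat\Sigma}$, the points $x_1,\dots,x_k$ are linearly ordered by $\prec_{\hat\Sigma}$; relabel the cycle (cyclically) so that $x_1$ is the $\prec_{\hat\Sigma}$-smallest among them. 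The main tool will be the interplay between the relation $x_i \le y_{i+1}$ and the combined data of $v$ and Lemma~\ref{lem:downset-left-or-right}.

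The key step is to pin down, for each $i$, a signature $\sigma \in \hat\Sigma$ recording \emph{how} $x_i \le y_{i+1}$: since $x_i \le y_{i+1}$ there is a covering chain from $x_i$ to $y_{i+1}$, and the first $\phi^*$-colors along it determine some $\sigma \in \Sigma$ with $y_{i+1} \in U_\sigma(x_i)$; because $x_i \in X_{\hat\Sigma}$ this $\sigma$ lies in $\hat\Sigma$. Fix such a $\sigma_i$ for each $i$. Then $x_i \in D_{\sigma_i}(y_{i+1}) \cap X_{\hat\Sigma}$. Now I would look at the coordinate $v_{\sigma_i}$ of the common vector $v$. If $v_{\sigma_i} = 0$, then no point of $D_{\sigma_i}(y_{i+1}) \cap X_{\hat\Sigma}$ lies to the right of $x_{i+1}$ (using $v_{\sigma_i}(x_{i+1},y_{i+1}) = v_{\sigma_i} = 0$), yet $x_i$ is such a point, so $x_i \preceq_{\hat\Sigma} x_{i+1}$; combined with Lemma~\ref{lem:downset-left-or-right} applied to $(x_{i+1},y_{i+1})$ — which forbids points of $D_{\sigma_i}(y_{i+1})\cap X_{\hat\Sigma}$ strictly on both sides of $x_{i+1}$ — one deduces that either $x_i \preceq_{\hat\Sigma} x_{i+1}$ always, or, in the case $v_{\sigma_i}=1$, $D_{\sigma_i}(y_{i+1})\cap X_{\hat\Sigma}$ does have a point to the right of $x_{i+1}$; since by Lemma~\ref{lem:neighborhoods}/Proposition~\ref{prop:interval} the set $D_{\sigma_i}(y_{i+1})\cap X_{\hat\Sigma}\supseteq [x_i]_{\sigma_i}\cap X_{\hat\Sigma}$ forms an interval in $\prec_{\hat\Sigma}$, and Lemma~\ref{lem:downset-left-or-right} says $x_{i+1}$ is not strictly inside that interval, the $v_{\sigma_i}=1$ case forces $x_{i+1} \prec_{\hat\Sigma} x_i$ together with $x_{i+1}\notin D_{\sigma_i}(y_{i+1})$. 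The upshot I aim for is a clean dichotomy: for every edge $i$ of the cycle, \emph{either} $x_i \prec_{\hat\Sigma} x_{i+1}$ or $x_{i+1}$ is to the left of the whole interval $D_{\sigma_i}(y_{i+1})\cap X_{\hat\Sigma}$ that contains $x_i$ — in particular $x_{i+1}\prec_{\hat\Sigma} x_i$ — with the two cases separated precisely by the value of the coordinate $v_{\sigma_i}$.

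From such a dichotomy the contradiction will be cyclic in nature: walking around the cycle the relative $\prec_{\hat\Sigma}$-position of consecutive $x_i$'s cannot consistently "wrap", and by choosing $x_1$ to be the $\prec_{\hat\Sigma}$-minimum one forces the edge leaving $x_1$ into the $v_{\sigma_1}=1$ case (nothing is to the left of $x_1$), which then propagates — via the fact that the coordinate value $v_{\sigma}$ is the \emph{same} for all pairs in $\Inc_{\hat\Sigma,v}(P)$ — to force, at some edge, that $x_i \le y_{i+1}$ fails or that some $x_j$ lies strictly to the left of $x_1$, the desired contradiction. I expect the main obstacle to be exactly this bookkeeping: organizing the case analysis so that the two lemmas and Proposition~\ref{prop:interval} combine into the stated dichotomy without hidden gaps (in particular handling the possibility $\sigma_i = \sigma_j$ for distinct edges, and the subtlety that $x_{i+1}$ may or may not itself lie in $D_{\sigma_i}(y_{i+1})$), and then extracting a genuinely cyclic contradiction rather than merely a local one. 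The rest — invoking the alternating-cycle characterization of reversibility, and the interval property of $\sigma$-classes in $\prec_{\hat\Sigma}$ — is routine given the machinery already set up.
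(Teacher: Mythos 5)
Your setup is sound and your ``dichotomy'' is in fact correct: writing $\sigma_i$ for the signature of a covering chain witnessing $x_i\le y_{i+1}$, one has $D_{\sigma_i}(y_{i+1})\cap X_{\hat\Sigma}=[x_i]_{\sigma_i}\cap X_{\hat\Sigma}$, and combining the common value $v_{\sigma_i}$ with Lemma~\ref{lem:downset-left-or-right} does force $x_i\prec_{\hat\Sigma} x_{i+1}$ when $v_{\sigma_i}=0$ and $x_{i+1}\prec_{\hat\Sigma} x_i$ when $v_{\sigma_i}=1$. With $x_1$ chosen $\prec_{\hat\Sigma}$-minimal this pins down the two coordinates $v_{\sigma_k}=1$ and $v_{\sigma_1}=0$ (note: it is the edge \emph{entering} $x_1$ that is forced to the value $1$, not the edge leaving it, since nothing lies to the left of $x_1$). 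Up to relabelling, this is exactly the first half of the paper's argument.

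The genuine gap is the ending. There is no ``cyclic contradiction'' to be extracted from the order $\prec_{\hat\Sigma}$ alone: the dichotomy only produces a cyclic sequence of comparisons $x_1\prec x_2,\ x_2\ ?\ x_3,\ \dots,\ x_k\succ x_1$ whose directions may alternate, and any non-monotone such pattern is realizable in a linear order. Already for $k=2$ the two forced conclusions are $x_1\prec_{\hat\Sigma}x_2$ (twice), which is perfectly consistent; the propagation you hope for via the common vector $v$ only constrains edges whose signatures coincide, and distinct edges may carry distinct signatures. The paper's proof does not close the argument combinatorially: it transfers the forced values through $v$ to get $v_{\sigma}(x_3,y_3)=1$ and $v_{\sigma'}(x_2,y_2)=0$ (with $\sigma=\sigma_1$, $\sigma'=\sigma_2$ in its indexing), uses $v_\sigma(x_3,y_3)=1$ to produce a \emph{new} point $x'$ with $x_3\prec_{\hat\Sigma}x'$ admitting a $\sigma$-covering chain to $y_3$, and then returns to the $2h$-centered coloring: the union of the four covering chains $Q_\sigma^{x_2,y''}$, $Q_{\sigma'}^{x_2,y_3}$, $Q_\sigma^{x',y_3}$, $Q_{\sigma'}^{x',y'}$ is connected, uses at most $2h-1$ colors of $\phi^*$, and (by the disjointness arguments through Lemma~\ref{lem:neighborhoods} and Proposition~\ref{prop:interval}) has no color appearing exactly once --- contradicting the $2h$-centeredness. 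That coloring step is the heart of the proof and is entirely absent from your plan, so the argument as proposed cannot be completed.
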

\begin{proof}
Let $\hat\Sigma\subset\Sigma$ and $v\in V_{\hat\Sigma}$, and   
suppose for a contradiction that $\Inc_{\hat\Sigma,v}(P)$ is not reversible. 
Then $\Inc_{\hat\Sigma,v}(P)$ contains an alternating cycle $(x_1,y_1),\dots,(x_k,y_k)$. 
Shifting cyclically the pairs if necessary, we may assume that $x_2$ is leftmost among all $x_i$'s with respect to $\prec_{\hat\Sigma}$. 
Consider a covering chain from $x_1$ to $y_2$ and another from $x_2$ to $y_3$ (if $k=2$ then $y_3=y_1$). 
Say they have signatures $\sigma$ and $\sigma'$ respectively, and denote them by $Q_\sigma^{x_1,y_2}$ and $Q_{\sigma'}^{x_2,y_3}$.

Clearly, $\sigma,\sigma'\in \hat\Sigma$.
Since $x_2\prec_{\hat\Sigma} x_1$ and $x_1\in D_{\sigma}(y_2) \cap  X_{\hat\Sigma}$, we conclude that $v_\sigma(x_2,y_2)=1$.
Similarly since $x_2\prec_{\hat\Sigma} x_3$ and  $x_2 \in D_{\sigma'}(y_3) \cap  X_{\hat\Sigma}$, we have $v_{\sigma'}(x_3,y_3)=0$ by Lemma~\ref{lem:downset-left-or-right}. 
As $v(x_2,y_2)=v=v(x_3,y_3)$ we also obtain that $v_\sigma(x_3,y_3)=1$ and $v_{\sigma'}(x_2,y_2)=0$.
(In particular, this shows $\sigma\neq \sigma'$.)

Since $v_\sigma(x_3,y_3)=1$ we know that there is $x'\in X_{\hat\Sigma}$ with $x_3\prec_{\hat\Sigma} x'$ such that there is a $\sigma$-covering chain $Q_\sigma^{x',y_3}$ from $x'$ to $y_3$.
Since $x'\in X_{\hat\Sigma}$ and $\sigma'\in \hat\Sigma$, there is also a $\sigma'$-covering chain $Q_{\sigma'}^{x',y'}$ from $x'$ to $y'$, for some $y'$ in $P$.
And finally, since $x_2 \in X_{\hat\Sigma}$ and $\sigma\in\hat\Sigma$, there is a $\sigma$-covering chain $Q_\sigma^{x_2,y''}$ from $x_2$ to $y''$, for some $y''$ in $P$.
See Figure~\ref{fig:alt-cycle} for a possible situation.
Now we consider the union
\[
 U:= Q_\sigma^{x_2,y''}\cup Q_{\sigma'}^{x_2,y_3}\cup Q_\sigma^{x',y_3}\cup Q_{\sigma'}^{x',y'}.
\]
$U$ is a connected subgraph of the cover graph of $P$, and its vertives are colored by $\phi^*$ with colors coming from $\sigma$ and $\sigma'$.
Since $\sigma$ and $\sigma'$ share at least one common color, namely $\phi^*(x_2) = (\phi(x_2), h(x_2))$, we see that at most $2h-1$ different colors appear on $U$.

We claim that the two paths $Q_\sigma^{x_2,y''}$ and $Q_\sigma^{x',y_3}$ are vertex disjoint.
Suppose not. 
Then by Lemma~\ref{lem:neighborhoods}  $[x_2]_{\sigma} = [x']_{\sigma}$.
Since $x_2\prec_{\hat\Sigma} x_3\prec_{\hat\Sigma} x'$, by Proposition~\ref{prop:interval} this implies that $[x']_{\sigma} \cap X_{\hat\Sigma}=[x_3]_{\sigma} \cap X_{\hat\Sigma}$.
However, this implies in turn that $U_\sigma(x') = U_\sigma(x_3)$, and in particular $y_3\in U_\sigma(x_3)$, contradicting the fact that $x_3$ and $y_3$ are incomparable in $P$. 

Next we claim that $Q_{\sigma'}^{x_2,y_3}$ and $Q_{\sigma'}^{x',y'}$ are also vertex disjoint. 
Arguing again by contradiction, suppose it is not the case. 
Then $[x_2]_{\sigma'} = [x']_{\sigma'}$ by Lemma~\ref{lem:neighborhoods}.
Since $x_2\prec_{\hat\Sigma} x_3\prec_{\hat\Sigma} x'$, by Proposition~\ref{prop:interval} it follows that $[x_2]_{\sigma'} \cap X_{\hat\Sigma}=[x_3]_{\sigma'} \cap X_{\hat\Sigma}$, which implies  $y_3\in U_{\sigma'}(x_2) = U_{\sigma'}(x_3)$, contradicting the fact that $x_3$ and $y_3$ are incomparable.

We conclude that every color from $\sigma$ is used at least twice on $U$ (since  $Q_\sigma^{x_2,y''}$ and $Q_\sigma^{x',y_3}$ are disjoint), and that the same holds for colors from $\sigma'$ (since $Q_{\sigma'}^{x_2,y_3}$ and $Q_{\sigma'}^{x',y'}$ are disjoint). 
This contradicts the fact that $\phi^*$ is a $2h$-centered coloring of the cover graph of $P$. 
\end{proof}

\begin{figure}[t]
\centering
\includegraphics{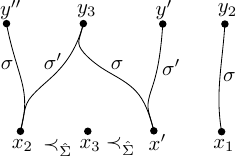}
\caption{Illustration of the covering chains induced by an alternating cycle.}
\label{fig:alt-cycle}
\end{figure}

Since $\norm{\Sigma} \leq (c+1)^h$ we have at most $2^{(c+1)^h}$ ways to choose $\hat\Sigma \subset \Sigma$. 
Also, for $\hat\Sigma\subset\Sigma$  we have $|V_{\hat\Sigma}| \leq  2^{|\hat\Sigma|} \leq 2^{|\Sigma|}\leq 2^{(c+1)^h}$. 
Therefore, we can bound the dimension of $P$ as follows:  
\[
	\dim(P)\leq 2^{(c+1)^h} \cdot 2^{(c+1)^h} = 2^{2(c+1)^h}.
\]

This concludes the proof of Theorem~\ref{thm:p-centered}. 

\section{Concluding Remarks}
Our main theorem reveals a connection between the notions of bounded expansion and order dimension.
We think that this might actually provide a new characterization of classes with bounded expansion: 
\begin{conjecture}
Let $\mathcal{C}$ be a class of graphs closed under taking subgraphs. 
Then $\mathcal{C}$ has bounded expansion if and only if for each $h\geq 1$, posets of height $h$ whose cover graphs are in $\mathcal{C}$ have bounded dimension.  
\end{conjecture}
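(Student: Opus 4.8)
The forward implication is exactly the content of Theorem~\ref{thm:p-centered}: if $\calC$ has bounded expansion then, for each $h$, the cover graphs of height-$h$ posets admit $2h$-centered colorings with a bounded number of colors, and hence these posets have dimension bounded by a function of $h$. So the entire difficulty lies in the converse, which the plan is to prove by contraposition: assuming $\calC$ is closed under subgraphs and does \emph{not} have bounded expansion, I would construct posets of bounded height with cover graphs in $\calC$ and unbounded dimension. In fact I expect to produce such posets already of height $2$, which matches all the counterexamples in this paper (the incidence posets of the introduction and the adjacency posets of Proposition~\ref{prop:locally-bounded-tw}).

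The first step is to extract a clean combinatorial witness of unbounded expansion. Since $\calC$ is not of bounded expansion, there is a \emph{fixed} rank $r$ with $\sup_{G\in\calC}\nabla_r(G)=\infty$; otherwise the function $f(r):=\sup_{G\in\calC}\nabla_r(G)$ would certify bounded expansion. Passing from grad to topological grad, which are functionally tied for each fixed rank, this yields a fixed number $s$ and graphs $H_n$ of average degree tending to infinity such that some subdivision of $H_n$ in which every edge is subdivided at most $s$ times is a subgraph of a member of $\calC$, and hence lies in $\calC$ by closure under subgraphs. A pigeonhole argument on the (at most $s+1$) possible subdivision multiplicities lets me assume, at the cost of only a constant factor in the average degree, that all edges are subdivided exactly the same number $s'\le s$ of times; after further restricting $H_n$ to a dense bipartite subgraph if needed, the uniform $s'$-subdivision $H_n^{(s')}$ is bipartite and still lies in $\calC$, with average degree tending to infinity.

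The second step is to realize these subdivisions as bounded-height posets. The height-$2$ incidence poset $I_G$, whose cover graph is precisely the $1$-subdivision of $G$, is the prototype ($s'=1$). For general uniform $s'$ I would orient each subdivided edge-path as a \emph{fence}, that is, an up--down zigzag: a fence has height $2$ regardless of its length, and its cover graph is exactly that path. Gluing these fences at the original vertices of $H_n^{(s')}$ (the bipartition of $H_n^{(s')}$ fixing, consistently, which endpoints are minimal) produces a poset $P_n$ of height $2$ whose cover graph is $H_n^{(s')}\in\calC$. It then remains only to show $\dim(P_n)\to\infty$.

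This last dimension lower bound is the crux, and the reason the statement is a conjecture. Even the base case $s'=1$ is already delicate: the bound $\dim(I_{K_n})\ge \log\log n$ recalled in the introduction rests on repeated use of the Erd\H{o}s--Szekeres theorem, and incidence posets contain \emph{no} large standard example, so the easy witnesses for large dimension are unavailable. Two difficulties must be overcome. First, the graphs produced by the reduction have large average degree but need not contain large cliques --- when $\calC$ is nowhere dense but not of bounded expansion there are provably no bounded-depth clique subdivisions --- so one needs a lower bound of the form $\dim(I_{H})\to\infty$ driven by \emph{edge density} alone rather than by clique number, for which a suitable generalization of the Erd\H{o}s--Szekeres argument to arbitrary dense bipartite $H$ would be required. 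Second, one must transport such a bound from the $1$-subdivision to the uniform $s'$-subdivision, i.e.\ show that bounded subdivision of the cover graph together with the fence reorientation cannot collapse an unbounded dimension to a bounded one; I would attempt this through alternating-cycle arguments on the fenced structure, tracing the incomparabilities of $I_H$ along the subdivision paths. I expect the first difficulty to be the genuine obstacle: it demands a density-based, rather than Ramsey- or clique-based, dimension lower bound for sparse height-$2$ posets, for which no technique is presently available.
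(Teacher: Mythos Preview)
This statement is a \emph{conjecture} in the paper, not a theorem; the paper does not prove it. Immediately after stating it the authors write: ``Note that Theorem~\ref{thm:bounded-expansion} shows one of the two directions of this conjecture, the other one remains open.'' So there is no proof in the paper to compare your proposal against.

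Your first paragraph is exactly right and matches the paper: the forward implication is Theorem~\ref{thm:bounded-expansion} (equivalently Theorem~\ref{thm:p-centered} via the $p$-centered-coloring characterization). For the converse you do not give a proof either; you give an outline and then correctly flag the point at which it breaks down. That honest self-assessment is accurate. The missing ingredient you isolate---a lower bound on $\dim(I_H)$ (or on the dimension of its fenced variants) that grows with the \emph{average degree} of $H$ rather than with a clique or Ramsey parameter---is precisely the obstruction; no such result is available, and the paper makes no attempt to supply one. Your reduction in Steps~1 and~2 (fixed-rank witness, uniform subdivision via pigeonhole, realizing the subdivision as a height-$2$ poset by fencing the subdivided paths with endpoints aligned to a bipartition) is a reasonable skeleton, but it is worth noting a subtlety you glide over: once you orient each subdivided path as a fence, you must check that the resulting relation is a \emph{partial order} (no directed cycles through several fences) and that its cover graph is still exactly $H_n^{(s')}$ (no shortcut comparabilities creating extra cover edges or killing existing ones). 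With $H_n$ bipartite and the fence orientations chosen consistently with the bipartition this can be arranged, but the parity of $s'$ interacts with which side is minimal, and one should verify that the fenced poset truly has height $2$ and the intended cover graph before relying on it.

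In short: your treatment of the forward direction agrees with the paper, and your treatment of the converse is not a proof but a candid description of why the conjecture is open---which is also the paper's position.
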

Note that Theorem~\ref{thm:bounded-expansion} shows one of the two directions of this conjecture, the other one remains open.

As mentioned earlier, classes with bounded expansion can be characterized in various ways by asking that certain coloring numbers are bounded (this includes e.g.\ {\em low tree-depth colorings}, {\em generalised colorings}, and {\em centered colorings}, see~\cite{NOdMW12}). 
In most cases, if one asks instead the coloring numbers to be subpolynomially bounded in terms of the number $n$ of vertices of the graph then this results in a characterization of nowhere dense classes. 
For instance,  a class $\calC$ is nowhere dense if and only if for every $\varepsilon>0$ and $p\geq1$, every $n$-vertex graph in $\calC$ has a $p$-centered coloring with $O(n^{\varepsilon})$ colors; see~\cite{GKS13} for a survey of such characterizations.  
Perhaps one could similarly establish a subpolynomial bound on the dimension of bounded-height posets with cover graphs in a nowhere dense class?  
(This was suggested by Dan Kr\'{a}l.)
 
\begin{problem}\label{prob:nowhere-dense}
Is it true that if $\mathcal{C}$ is a nowhere dense class of graphs, then for every $\epsilon>0$ and $h\geq 1$, posets on $n$ points of height $h$ and with cover graphs in $\mathcal{C}$ have dimension $O(n^{\epsilon})$?
\end{problem}

\section*{Acknowledgements}

We thank David R.\ Wood for suggesting that Theorem~\ref{thm:bounded-expansion} might be true, which prompted us to work on this question, and Bartosz Walczak for his insightful remarks on the proof.  
We are also grateful to the referees of the current version of this paper and of its preliminary version~\cite{SODAversion} for their numerous and helpful comments.

\nocite{TM77}
\nocite{FTW13}
\nocite{biro}
\nocite{JMTWW}

\bibliographystyle{plain}
\bibliography{posets-dimension}

\end{document}